\documentclass[dvips]{imsart}

\RequirePackage[OT1]{fontenc}
\usepackage{bbm}
\usepackage[numbers]{natbib}
\usepackage{amsthm,amsmath}
\usepackage{graphics,ifthen}
\usepackage{latexsym}
\usepackage{mathrsfs}
\usepackage{amssymb}



\startlocaldefs

\newtheoremstyle{example}{\topsep}{\topsep}%
     {}
     {}
     {\rmfamily}
     {}
     {\newline}
     {\thmname{#1}\thmnumber{ #2}\thmnote{ #3}}

   \theoremstyle{example}

\numberwithin{equation}{section}
\theoremstyle{plain}
\newtheorem{thm}{Theorem}[section]

\newtheorem{prop}{Proposition}[section]
\newtheorem{lem}{Lemma}[section]
\newtheorem{rem}{Remark}[section]

\newtheorem{cor}{Corollary}[section]

\newcommand{\Lower}[2]{\smash{\lower #1 \hbox{#2}}}
\newcommand{\ben}{\begin{enumerate}}
\newcommand{\een}{\end{enumerate}}
\newcommand{\bi}{\begin{itemize}}
\newcommand{\ei}{\end{itemize}}

\endlocaldefs

\begin{document}
\begin{frontmatter}
\title{Stick-breaking Pitman-Yor processes given the species sampling size\protect} \runtitle{conditional $\mathrm{GEM}(\alpha,\theta)$}

\begin{aug}
\author{\fnms{Lancelot F.} \snm{James}\thanksref{t1}\ead[label=e1]{lancelot@ust.hk}}

\thankstext{t1}{Supported in
part by the grants RGC-GRF 16300217 of the HKSAR and RGC-HKUST 601712 of the HKSAR.}

\runauthor{Lancelot F. James}

\affiliation{Hong Kong University of Science and Technology}

\address[a]{Lancelot F. James\\ The Hong Kong University of Science and
Technology, \\Department of Information Systems, Business Statistics and Operations Management,\\
Clear Water Bay, Kowloon, Hong Kong.\\ \printead{e1}.}

\contributor{James, Lancelot F.}{Hong Kong University of Science
and Technology}

\end{aug}

\begin{abstract}
Random discrete distributions, say $F,$ known as species sampling models,  represent a rich class of models for classification and clustering, in Bayesian statistics and machine learning. They also arise in various areas of probability and its applications. Pitman\cite{PitmanPoissonMix}, within the species sampling context, shows that mixed Poisson processes may be interpreted  as the sample size up till a given time or in terms of waiting times of appearance of individuals to be classified. He notes connections to some recent work in the Bayesian statistic/machine learning literature, with some more classical results, and their interpretation within a species sampling context.
Armed with the interpretations and results in ~\cite{PitmanPoissonMix}, we let $F:=F_{\alpha,\theta},$ be a Pitman-Yor process for $\alpha\in (0,1),$ and $\theta>-\alpha,$ with $\alpha$-diversity equivalent in distribution to $S^{-\alpha}_{\alpha,\theta},$ where $S_{\alpha,0}:=S_{\alpha}$ is a stable random variable, with density $f_{\alpha}(t)$ and $\mathbb{E}[\mbox e^{-\lambda S_{\alpha}}]={\mbox e}^{-\lambda^{\alpha}},$ and let $(N_{S_{\alpha,\theta}}(\lambda),\lambda\ge 0)$ denote a mixed Poisson process with rate $S_{\alpha,\theta}.$ In this paper we derive explicit stick-breaking representations of $F_{\alpha,\theta}$ given $N_{S_{\alpha,\theta}}(\lambda)=m,$ for each fixed $m=0,1,2,3,\ldots.$ More precisely, if $(P_{\ell})\sim \mathrm{PD}(\alpha,\theta)$, denotes a ranked sequence following the two parameter Poisson-Dirichlet distribution, we obtain explicit representations of the sized biased permutation of $(P_{\ell})|N_{S_{\alpha,\theta}}(\lambda)=m.$ Due to distributional results we shall develop in a more general context, it suffices to consider the stable case $F_{\alpha,0}|N_{S_{\alpha}}(\lambda)=m.$ Notably, since 
$S_{\alpha}|N_{S_{\alpha}}(\lambda)=0,$ has density ${\mbox e}^{\lambda^{\alpha}}{\mbox e}^{-\lambda t}f_{\alpha}(t),$ it follows that $F_{\alpha,0}|N_{S_{\alpha}}(\lambda)=0,$ is equivalent in distribution to the popular normalized generalized gamma process. Hence, we obtain explicit stick-breaking representations for the generalized gamma class recovering a not well known result in the unpublished manuscript of James~\cite{JamesPGarxiv}.
\end{abstract}

\begin{keyword}[class=AMS]
\kwd[Primary ]{60C05, 60G09} \kwd[; secondary ]{60G57,60E99}
\end{keyword}

\begin{keyword}
\kwd{Conditioning on mixed Poisson processes, Two
parameter Poisson Dirichlet processes, Pitman-Yor process, Species Sampling, Stick-Breaking}
\end{keyword}

\end{frontmatter}
\section{Introduction}
Consider random discrete distribution functions $F(y)=\sum_{k=1}^{\infty}P_{k}\mathbb{I}_{\{U_{k}\leq y\},}$ where we can assume the $(P_{\ell}):=(P_{\ell},\ell\ge 1)$  are placed in ranked order, and $(U_{\ell})$, independent of $(P_{\ell}),$ are a collection of iid $\mathrm{Uniform}[0,1]$ random variables. Formally we can say $(P_{\ell})\in \mathcal{P}_{\infty}=\{\mathbf{s}=(s_{1},s_{2},\ldots):s_{1}\ge s_{2}\ge\cdots\ge 0 {\mbox { and }} \sum_{i=1}^{\infty}s_{i}=1\},$ where $\mathcal{P}_{\infty}$ denotes the space of ranked mass partitions summing to $1.$ As discussed in~\cite{BerFrag,Kingman75,Pit06,PY97}, there has been considerable interest in the laws and interpretation of objects in  
$\mathcal{P}_{\infty},$ within probability, statistics and related areas. The two most notable laws are the Poisson-Dirichlet law, say $\mathrm{PD}(\theta):=\mathrm{PD}(0,\theta)$ with parameter $\theta>0,$ see~\cite{Kingman75,Ewens, Pit96},and its two-parameter extension,$\mathrm{PD}(\alpha,\theta)$ with parameters $0\leq \alpha<1$ and $\theta>-\alpha.$ 
The latter, for $0<\alpha<1,$ was developed, see~\cite{BPY,PY92,PPY92,Pit96,PY97,Pit06}, where $(P_{\ell})\sim \mathrm{PD}(\alpha,\theta)$ may be interpreted within the context of ranked lengths of excursions of Brownian motion and more general Bessel processes of dimension $2-2\alpha.$ 

Since the early work of Ferguson~\cite{Ferg73} on the Dirichlet process,$F(y),$ and by-products arising from sampling from such distributions,  have served as major components in Bayesian nonparametric statisics and machine learning applications. When $(P_{\ell})\sim \mathrm{PD}(0,\theta)$ $F(y):=F_{0,\theta}(y)$ is a Dirichlet process.  $F(y):=F_{\alpha,\theta}(y),$ with $(P_{\ell})\sim \mathrm{PD}(\alpha,\theta),$ is often referred to as a Pitman-Yor process. However, due to the generality intractability of $(P_{\ell})\sim\mathrm{PD}(\alpha,\theta),$ the $(P_{\ell})$ are rarely used directly in statistical applications. Instead, other remarkable properties of $F_{0,\theta}$ and $F_{\alpha,\theta},$ including generalized Chinese restaurant processes and tractable stick-breaking representations~(arising from the size biased sampling re-arrangement of $(P_{\ell})$), are employed,(see~\cite{Broderick2,IJ2001,IJ2003,Pit96,Sethuraman}). These can be described within the context of species sampling (sequential capturing and tagging animals appearing in an eco-system) or in effect sampling $(Y_{1},\ldots,Y_{n})$ conditionally iid from $F_{\alpha,\theta}.$ Our goal in this paper is to provide interpretation and derive explicit stick-breaking representations for  $F_{\alpha,\theta}$ conditioned on the number of animals that have appeared up to a time $\lambda,$ where this number is modeled by a mixed Poisson process.  
These interpretations are obtained from an unpublished mansucript of Pitman~\cite{PitmanPoissonMix}.
Details will be explained below. As a by-product we obtain an explicit stick-breaking representation, in terms of tractable random variables,  for the popular $F(y)$ following a \textit{normalized generalized gamma process,} see~\cite{LPD}, which equates to the case of $F_{\alpha,0}$ given that $m=0$ animals have been trapped up till some time $\lambda.$ This representaton in the normalized generalized gamma case, without the conditional  interpretation with respect to $F_{\alpha,0}$  was first given in the unpublished work of~\cite{JamesPGarxiv}. Subsequent stick-breaking representations for the normalized generalized gamma process, that is $m=0,$ for general $\alpha,$ appear in~\cite{FavStick1,FavStick2,LauStick}. See also \cite{Fav1} for a description of $\alpha=1/2,m=0$ corresponding to the Normalized Inverse Gaussian process. In Section~\ref{m5} of this paper we shall present results for $\alpha=1/2$ and general $m=0,1,2,\ldots,$ based on the results of \cite{AldousPit,PPY92} as discussed in \cite{Pit02,Pit06}.

\begin{rem}See \cite[Section 5,2]{Pit02} for the work of  McCloskey~\cite{McCloskey} on the (normalized) generalized gamma process within a species sampling context.
\end{rem}

We first recount some aspects of species sampling models, and random distribution functions based on~\cite{Pit96}, (see also \cite{BerFrag,IJ2001,IJ2003,PPY92,Pit02,Pit06,PY92,PY97}). Let $F_{0,\theta},$ for $\theta>0,$ denote a random discrete distribution following a Dirichlet process law as in Ferguson~\cite{Ferg73}. 
Pitman~\cite{Pit96}, starting from a Bayesian viewpoint of the Blackwell-MacQueen Polya urn and its relation to  conditionally iid sampling $(Y_{1},\ldots,Y_{n})$ from $F_{0,\theta}$, presents a broad view of Fisher's model for species sampling, whereby within an eco-system animals/individuals are trapped and tagged/classified, one by one in a sequential fashion over time. The first $n$ individuals are classified into $K_{n}\leq n$ species with iid unique tags $(Y^{*}_{1},\ldots,Y^{*}_{K_{n}})$ and membership, in order of appearance, denoted by the set ${\{C_{1},\ldots,C_{K_{n}}\}},$ with $C_{j}:={\{i:Y_{i}=Y^{*}_{j}\}}$ having random multiplicities $|C_{j}|:=N_{j,n},$ constituting a random partition of the integers $[n]:=\{1,\ldots,n\}.$  In the case of $F_{0,\theta},$ the law of ${\{C_{1},\ldots,C_{K_{n}}\}},$ corresponds to the one parameter Chinese restaurant process with law denoted as $\mathrm{CRP}(\theta):=\mathrm{CRP}(0,\theta).$ Furthermore the relative frequencies $(N_{j,n}/n)$ placed in ranked order converge to  $(P_{\ell})\in \mathcal{P}_{\infty}=\{\mathbf{s}=(s_{1},s_{2},\ldots):s_{1}\ge s_{2}\ge\cdots\ge 0 {\mbox { and }} \sum_{i=1}^{\infty}s_{i}=1\}$ having a Poisson-Dirichlet law, denoted as $(P_{\ell})\sim \mathrm{PD}(\theta).$ While $(P_{\ell})$ has a fairly intractable distributional form, the limit of the relative frequencies $(N_{j,n}/n)$ converge 
to the size biased re-arrangement of $(P_{\ell}),$ described as $(\tilde{P}_{\ell}=(1-W_{\ell})\prod_{j=1}^{\ell-1}W_{i},\ell\ge 1)$ where $(W_{i})$ are iid $\mathrm{Beta}(\theta,1),$ and $\tilde{P}_{1}=1-W_{1}$ is the first size biased pick from $(P_{\ell}).$ $(\tilde{P}_{\ell})$ is said to have a $\mathrm{GEM}(\theta):=\mathrm{GEM}(0,\theta)$ distribution~\cite{Ewens}, and leads to the tractable stick-breaking representation of $F_{0,\theta},$ in particular, for $(U_{\ell}), (\tilde{U}_{\ell})$ iid $\mathrm{Uniform}[0,1]$ independent of $(P_{\ell}),(\tilde{P}_{\ell})$,
\begin{equation}
F_{0,\theta}(y)=\sum_{k=1}^{\infty}P_{k}\mathbb{I}_{\{U_{k}\leq y\}}= \sum_{k=1}^{\infty}\tilde{P}_{k}\mathbb{I}_{\{\tilde{U}_{k}\leq y\}}
\label{stickDP}
\end{equation}
\cite{Pit96} shows that one may extend the framework above by replacing $F_{0,\theta}$ with general $F$ defined as in~(\ref{stickDP}) by placing more general distributions on $(P_{\ell})\in \mathcal{P}_{\infty} $ and hence $(\tilde{P}_{\ell}).$ Of particular interest to us, as highlighted in~\cite{Pit96}, is the case where $(P_{\ell})\sim \mathrm{PD}(\alpha,\theta),$  where $\mathrm{PD}(\alpha,\theta)$ denotes a two parameter  Poisson Dirichlet distribution, for 
$0\leq \alpha<1, \theta>-\alpha,$ with $\mathrm{PD}(0,\theta)=\mathrm{PD}(\theta),$ with origins and applications to for instance excursion thoeory, random tree and graph models as described in \cite{BerFrag,PPY92,PY97,Pit06}. Its size-biased permutation $(\tilde{P}_{\ell}):=(\tilde{P}_{\ell}=(1-W_{\ell})\prod_{j=1}^{\ell-1}W_{i},\ell\ge 1)$ is such that the $W_{i}$ are now independent $\mathrm{Beta}(\theta+i\alpha,1-\alpha),$ for $i=1,2,\ldots.$ One may express this distribution as  
$(\tilde{P}_{\ell})\sim \mathrm{GEM}(\alpha,\theta).$ Conversely $\mathrm{Rank}((\tilde{P}_{\ell}))\sim \mathrm{PD}(\alpha,\theta),$ where $\mathrm{Rank}(\cdot)$ denotes the ranked rearrangement of the sequence into $\mathcal{P}_{\infty}.$  

Remarkably, the two-parameter family $\mathrm{PD}(\alpha,\theta)$ is the only case where its size-biased permutation consists of independent $(W_{i})$,~\cite{Pit95,Pit96}.  The corresponding $F:=F_{\alpha,\theta},$ is now commonly referred to as the  Pitman-Yor process, as named in \cite{IJ2001}, and is regularly employed in complex applications in Bayesian statistics and machine learning with particular utility in cases involving power law behavior.
\begin{rem}In relation to the above statements, one can construct many $F,$ such that its mass probabilities are of the form $(1-W_{k})\prod_{i=1}^{k-1}W_{i},$ for $(W_{i})$ independent or iid random variables. However, they do not have the interpretation as the size biased permutation of their ranked counterpart. See \cite{Doksum,GnedinPitman2,JamesNTR} for the case of $F(y)=\int_{0}^{\infty}S(t-)
\Lambda(dt,y)$ derived from the masses of a Neutral to the Right $(S(t):t>0)$  process under a homogeneous subordinator. 
\end{rem}
\subsection{Species sampling waiting times and counts} 
Hidden from the species sampling schemes above, are random mechanisms describing the time of appearance of individuals, or the sample size up till a given time. Pitman\cite{PitmanPoissonMix} points out that the early work of Fisher\cite{Fisher}, McCloskey~\cite{McCloskey} and other authors, already provide a rich mixed Poisson process framework to describe such mechanisms in a species sampling context. 
\cite{PitmanPoissonMix}, offers a fresh exposition on mixed Poisson models within this context which points out links to this older work and more recent works in the literature, in particular the recent appearance of mixed Poisson models in Bayesian non-parametric statistics and machine learning.  This, for instance connects to recent work on frequency of frequency (FoF) distributions~\cite{ZhouFoF}, where the population size is modelled by mixed Poisson random variables, as well as gives an interpretation of conditioning on latent mixed gamma variables, $(T_{r})$ described below, as employed in~\cite{JLP2} and elsewhere, in terms of waiting/arrival times. 
 
The following descriptions, including section~\ref{PK}, can be read from~\cite{PitmanPoissonMix}. The distributional expressions below are based on elementary conditioning arguments.
Let $A:=\sum_{k=1}^{\infty}A_{k},$ denote a non-negative almost surely finite (random)sum, such that one may set $(P_{k}:=A_{k}/A)\in\mathcal{P}_{\infty}.$ For $r=1,2\ldots,$ let $G_{r}:=\sum_{j=1}^{r}\mathbf{e}_{j}:=G_{r-1}+\mathbf{e}_{r}$ denote increasing sums of independent standard exponential variables. For $A$ independent of the sequence $(G_{r}),$ define, for each $r,$ $T_{r}=G_{r}/A,$ whence $(T_{r})$ may be interpreted as the sequence of waiting times of a mixed Poisson process $(N_{A}(t) ;t\ge 0)$ defined as 
$$
N_{A}(t)=\sum_{r=1}^{\infty}\mathbb{I}_{\{T_{r}\leq t\}}.
$$
That is, $T_{r}=\inf{\{t:N_{A}(t)=r\}}$, for $r=1,2,\ldots$. Within the species sampling context, $A$ has the interpretation as the \textit{total species abundance}, with $(A_{k})$ corresponding to abundance of type. $N_{A}(t)$ is number of animals/individuals appearing up till time $t,$ and for each $n=1,2,3,\ldots$ $T_{n}$ represents the time of appearance/trapping of the $n$-th individual. 
There is the following description of the conditional distribution  of $A$ given $N_{A}(\lambda)=0,$ that is to say given that no individuals have been observed up till time $\lambda\ge 0,$ 
\begin{equation}
\mathbb{P}(A\in da|N_{A}(\lambda)=0)=\mathbb{P}(A\in da|T_{1}>\lambda)=
\frac{{\mbox e}^{-\lambda a}\mathbb{P}(A\in da)}{\mathbb{E}[{\mbox e}^{-\lambda A}]}.
\label{AconPoisson}
\end{equation}
where (\ref{AconPoisson}) corresponds to that of a random variable $A_{0}(\lambda)$ formed by exponential tilting the distribution of $A$
and for $m=1,2,\ldots,$ and $\lambda>0$
\begin{equation}
\mathbb{P}(A\in da|N_{A}(\lambda)=m)=\mathbb{P}(A\in da|T_{m}=\lambda)=
\frac{a^{m}{\mbox e}^{-\lambda a}\mathbb{P}(A\in da)}{\mathbb{E}[A^{m}{\mbox e}^{-\lambda A}]}.
\label{AconPoisson2}
\end{equation}
Denote  a random variable specified by~(\ref{AconPoisson}) and (\ref{AconPoisson2}) as $A_{m}(\lambda).$ Furthermore, for brevity set $\gamma^{[m]}_{\lambda}(da):=\mathbb{P}(A\in da|N_{A}(\lambda)=m),$ for $m=0,1,2,\ldots$

\subsection{Poisson Kingman distributions given $N_{A}(\lambda)=m$}\label{PK}
If we assume that $(A_{k})$ are the ranked jumps of a subordinator say $(A(y):y\ge 0),$ then one may set $A:=A(1),$ satisfying $\mathbb{E}[{\mbox e}^{-\lambda A}]={\mbox e}^{-\psi(\lambda)},$ where $\psi(\lambda)=\int_{0}^{\infty}
(1-{\mbox e}^{-\lambda s})\rho(s)ds ,$ and $\rho(s)$ is a L\'evy density which we further assume satisfies $\int_{0}^{\infty}\rho(s)ds=\infty.$ Then, as defined in~\cite[Definition 3]{Pit02}(see also \cite{Kingman75}), $(P_{\ell})$ has the law of a \textit{Poisson-Kingman distribution with Levy density} $\rho,$ denoted $\mathrm{PK}(\rho).$ Furthermore, the law of $(P_{\ell})|A=t,$ is denoted a $\mathrm{PK}(\rho|t),$ and for a probability distribution $\gamma(dt)$ on $(0,\infty),$
$$
(P_{\ell})\sim \mathrm{PK}(\rho,\gamma)=\int_{0}^{\infty}\mathrm{PK}(\rho|t)\gamma(dt).
$$
where $\mathrm{PK}(\rho,\gamma)$ is a said to be a \textit{Poisson-Kingman distribution with L\'evy density $\rho$ and mixing distribution $\gamma$.} Although one may create an infinite number of distributions over $\mathcal{P}_{\infty},$
it is not obvious how to interpret the meaning of $(P_{\ell})$ with respect to $\gamma.$ 
However, in the present setting, since the law of $N_{A}$ depends on $(P_{\ell})$ only through $A,$  the law of $(P_{\ell})|A=t,N_{A}=m$ is equivalent to that of $(P_{\ell})|A=t\sim \mathrm{PK}(\rho|t).$  It is evident that $(P_{\ell})|N_{A}(\lambda)=m,$ for each $m=0,1,2,\ldots$ have Poisson-Kingman distributions with mixing distributions $\gamma^{[m]}_{\lambda}.$ That is the distributions $\mathrm{PK}(\rho,\gamma^{[m]}_{\lambda}).$
The above species sampling framework then allows one to interpret these laws.

\begin{rem} (\ref{AconPoisson2}) indicates that $(P_{\ell})|T_{m}=\lambda,$ has the same law as $(P_{\ell})|N_{A}(\lambda)=m,$  for $m=1,2,\ldots.$ Furthermore $\mathrm{PK}(\rho,\gamma^{[0]}_{\lambda})=\mathrm{PK}(\rho_{\lambda}),$
for $\rho_{\lambda}(s)={\mbox e}^{-\lambda s}\rho(s).$ 
\end{rem} 
\begin{rem}General descriptions of the sized biased permutations of $(P_{\ell})\sim \mathrm{PK}(\rho,\gamma^{[m]}_{\lambda}),$ can be obtained by employing the results of~\cite{PPY92}. 
\end{rem}

\subsection{Distributional results for $-\infty<\varrho_{1}<\varrho_{2}<\infty$}
We now describe a randomization relationship between general $\varrho$ biased random variables, and hence corresponding $(P_{\ell}).$ The results shows that one needs to only consider the cases $m=0,1,2,\ldots,$ and a suitable randomization to recover all possible laws.  
\begin{prop}\label{transfer}
Suppose that for a real number $\varrho,$ $A_{\varrho}(\lambda)$ is a positive random variable with density 
$$
\mathbb{P}(A_{\varrho}(\lambda)\in da)=
\frac{a^{\varrho}{\mbox e}^{-\lambda a}\mathbb{P}(A\in da)}{\mathbb{E}[A^{\varrho}{\mbox e}^{-\lambda A}]}.
$$
and consider for $-\infty<\varrho_{1}<\varrho_{2}<\infty,$ the random variable $Y_{\varrho_{2},\varrho_{1}}(\lambda)\overset{d}=G_{\varrho_{2}-\varrho_{1}}/{A_{\varrho_{1}}(\lambda)}.$ Then for $-\infty<\varrho_{1}<\varrho_{2}<\infty,$
$$
A_{\varrho_{2}}(\lambda+Y_{\varrho_{2},\varrho_{1}}(\lambda))\overset{d}=A_{\varrho_{1}}(\lambda)
$$
\end{prop}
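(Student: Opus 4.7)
The plan is a direct density computation that hinges on a precise cancellation of normalizing constants; no deep tools are required. The mechanism underneath is a Bayes/tower identity: I will recognize $A_{\varrho_2}(\lambda+y)$ as the conditional law of $A_{\varrho_1}(\lambda)$ given $Y_{\varrho_2,\varrho_1}(\lambda)=y$, whence marginalizing over $y$ recovers the prior.

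First I would write down the density of $Y=Y_{\varrho_2,\varrho_1}(\lambda)=G_{\varrho_2-\varrho_1}/A_{\varrho_1}(\lambda)$. Conditioning on $A_{\varrho_1}(\lambda)=a'$ and using the gamma density of $G_{\varrho_2-\varrho_1}$ (shape $\varrho_2-\varrho_1$, rate $1$), a one-line change of variables gives
$$f_{Y\mid A_{\varrho_1}(\lambda)=a'}(y)=\frac{a'(a'y)^{\varrho_2-\varrho_1-1}e^{-a'y}}{\Gamma(\varrho_2-\varrho_1)}.$$
Integrating against the density $a'^{\varrho_1}e^{-\lambda a'}\mathbb{P}(A\in da')/\mathbb{E}[A^{\varrho_1}e^{-\lambda A}]$ of $A_{\varrho_1}(\lambda)$ and pulling $y^{\varrho_2-\varrho_1-1}$ outside the integral produces
$$f_Y(y)=\frac{y^{\varrho_2-\varrho_1-1}\,\mathbb{E}[A^{\varrho_2}e^{-(\lambda+y)A}]}{\Gamma(\varrho_2-\varrho_1)\,\mathbb{E}[A^{\varrho_1}e^{-\lambda A}]}.$$

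Next I would compute the law of $A_{\varrho_2}(\lambda+Y)$ by conditioning on $Y=y$; by construction the tilting at level $\lambda+y$ is carried out independently of the mechanism producing $y$, so
$$\mathbb{P}(A_{\varrho_2}(\lambda+Y)\in da)=\int_0^\infty\frac{a^{\varrho_2}e^{-(\lambda+y)a}\mathbb{P}(A\in da)}{\mathbb{E}[A^{\varrho_2}e^{-(\lambda+y)A}]}\,f_Y(y)\,dy.$$
The key step is that the two factors $\mathbb{E}[A^{\varrho_2}e^{-(\lambda+y)A}]$ cancel, reducing the expression to
$$\frac{a^{\varrho_2}e^{-\lambda a}\mathbb{P}(A\in da)}{\Gamma(\varrho_2-\varrho_1)\,\mathbb{E}[A^{\varrho_1}e^{-\lambda A}]}\int_0^\infty y^{\varrho_2-\varrho_1-1}e^{-ya}\,dy.$$
The remaining $y$-integral equals $\Gamma(\varrho_2-\varrho_1)/a^{\varrho_2-\varrho_1}$, leaving precisely $a^{\varrho_1}e^{-\lambda a}\mathbb{P}(A\in da)/\mathbb{E}[A^{\varrho_1}e^{-\lambda A}]$, the density of $A_{\varrho_1}(\lambda)$.

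No substantive obstacle remains: Fubini is automatic by nonnegativity, and the finiteness of $\mathbb{E}[A^{\varrho_i}e^{-\lambda A}]$ for $i=1,2$ is implicit in the well-definedness of $A_{\varrho_1}(\lambda)$ and $A_{\varrho_2}(\lambda)$. The only delicate point is conceptual — reading $A_{\varrho_2}(\lambda+Y)$ as a compound variable in which the tilting is performed independently of $Y$ — once this is clear the identity is a bookkeeping exercise. If one prefers a conceptual reading, the same calculation shows that the posterior density of $A_{\varrho_1}(\lambda)$ given $Y=y$ is proportional to $a^{\varrho_2}e^{-(\lambda+y)a}\mathbb{P}(A\in da)$, i.e. it equals the law of $A_{\varrho_2}(\lambda+y)$, so the proposition is the tower property in disguise.
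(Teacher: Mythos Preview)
Your proof is correct and follows exactly the same route as the paper's: compute the density of $Y_{\varrho_2,\varrho_1}(\lambda)$ as $y^{\varrho_2-\varrho_1-1}\mathbb{E}[A^{\varrho_2}e^{-(\lambda+y)A}]/\bigl(\Gamma(\varrho_2-\varrho_1)\mathbb{E}[A^{\varrho_1}e^{-\lambda A}]\bigr)$, then mix the law of $A_{\varrho_2}(\lambda+y)$ against this density and observe the cancellation. The paper's proof is a two-line sketch of precisely this calculation; you have simply written out the cancellation and the residual gamma integral explicitly, and added the Bayesian/tower reading as a gloss.
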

\begin{proof}The density of $Y_{\varrho_{2},\varrho_{1}}(\lambda)$ can be expressed as
$$
\mathbb{P}(Y_{\varrho_{2},\varrho_{1}}(\lambda)\in dy)/dy=\frac{y^{\varrho_{2}-\varrho_{1}-1}\mathbb{E}[A^{\varrho_{2}}{\mbox e}^{-(\lambda+y)A}]}{\Gamma(\varrho_{2}-\varrho_{1})\mathbb{E}[A^{\varrho_{1}}{\mbox e}^{-\lambda A}]}
$$
It follows that $\mathbb{P}(A_{\varrho_{1}}(\lambda)\in da)=\int_{0}^{\infty}\mathbb{P}(A_{\varrho_{2}}(\lambda+y)\in da)\mathbb{P}(Y_{\varrho_{2},\varrho_{1}}(\lambda)\in dy)$
\end{proof}
\section{The $\mathrm{PD}(\alpha,\theta)$ case given $N_{S_{\alpha,\theta}}(\lambda)$}
If $(A(y):y\ge 0)$ is a stable subordinator, such that $S_{\alpha}:=A(1):=A$ is a stable random variable with Laplace transform $\mathbb{E}[{\mbox e}^{-\lambda S_{\alpha}}]={\mbox e^{-\lambda^{\alpha}}},$ with density denoted as $f_{\alpha}(t),$ then $(P_{\ell}:=A_{\ell}/S_{\alpha},\ell\ge 1)\sim \mathrm{PD}(\alpha,0).$ The corresponding  L\'evy density is given by $\rho_{\alpha}(s)=\alpha s^{-\alpha-1}/\Gamma(1-\alpha),$ see~\cite[eq. (54)]{Pit02}.
Hence $\mathrm{PD}(\alpha,0):=\mathrm{PK}(\rho_{\alpha}).$ The law of $(P_{\ell})|S_{\alpha}=t$ is denoted as $\mathrm{PD}(\alpha|t):=\mathrm{PK}(\rho_{\alpha}|t),$ and $(P_{\ell})\sim \mathrm{PD}(\alpha,\theta)=\int_{0}^{\infty}\mathrm{PD}(\alpha|t)f_{\alpha,\theta}(t)dt,$ where $f_{\alpha,\theta}(t)=t^{-\theta}f_{\alpha}(t)/\mathbb{E}[S^{-\theta}_{\alpha}]$ corresponds to the density of the random variable $S_{\alpha,\theta}.$ It follows that for $(P_{\ell})\sim \mathrm{PD}(\alpha,\theta),$
the law of $(P_{\ell})|N_{S_{\alpha,\theta}}(\lambda)=m,$ can be expressed as 
\begin{equation}
\mathbb{P}^{[m-\theta]}_{\alpha}(\lambda)=\int_{0}^{\infty}\mathrm{PD}(\alpha|t)\mathbb{P}(S_{\alpha,\theta}\in dt|N_{S_{\alpha,\theta}}(\lambda)=m),
\end{equation}
where $\mathbb{P}(S_{\alpha,\theta}\in dt|N_{S_{\alpha,\theta}}(\lambda)=0)=\mathbb{P}(S_{\alpha,\theta}\in dt|\frac{G_{1}}{S_{\alpha,\theta}}>\lambda),$ and ,for $m=1,2,\ldots$ there is the equivalence,
$$\mathbb{P}(S_{\alpha,\theta}\in dt|N_{S_{\alpha,\theta}}(\lambda)=m)=\mathbb{P}(S_{\alpha,\theta}\in dt|\frac{G_{m}}{S_{\alpha,\theta}}=\lambda),$$
and otherwise for each $m=0,1,2,\ldots,$ has the density
\begin{equation}
f^{[m-\theta]}_{\alpha}(t|\lambda)=\frac{{\mbox e}^{-\lambda t}t^{m-\theta}f_{\alpha}(t)}{\mathbb{E}[S^{m-\theta}_{\alpha}{\mbox e}^{-\lambda S_
{\alpha}}]},
\label{condden}
\end{equation}
for $\theta>-\alpha.$ Set, $\varrho:=m-\theta$ which can be any real number, and let 
$S_{\alpha,\varrho}(\lambda)$ denote a random variable with density~(\ref{condden}). $S_{\alpha,\varrho}(\lambda)$ is well defined for all $\lambda\ge 0$ and $\varrho=-\theta<\alpha,$ with $S_{\alpha,-\theta}(0)=S_{\alpha,\theta},$ and otherwise defined for any $\varrho$ and $\lambda>0.$ A special case of Proposition~\ref{transfer} leads to the next result.
\begin{cor}\label{PDrecovery}Let for $\varrho_{2}>\varrho_{1}$, $Y_{\varrho_{2},\varrho_{1}}(\lambda)\overset{d}=G_{\varrho_{2}-\varrho_{1}}/S_{\alpha,\varrho_{1}}(\lambda)$ then 
$$\mathbb{E}[\mathbb{P}^{[\varrho_{2}]}_{\alpha}(\lambda+Y_{\varrho_{2},\varrho_{1}}(\lambda))]=
\mathbb{P}^{[\varrho_{1}]}_{\alpha}(\lambda).
$$
As special cases, set $\varrho_{1}=-\theta,$ for $\theta>-\alpha,$ then $Y_{\varrho_{2},-\theta}(0)\overset{d}=G_{\varrho_{2}+\theta}/S_{\alpha,\theta}$ and
$$\mathbb{E}[\mathbb{P}^{[\varrho_{2}]}_{\alpha}(Y_{\varrho_{2},-\theta}(0))]=
\mathrm{PD}(\alpha,\theta).
$$
Conversely if $(P_{\ell})\sim \mathrm{PD}(\alpha,\theta)$ then $(P_{\ell})|G_{\varrho_{2}+\theta}/S_{\alpha,\theta}=\lambda\sim \mathbb{P}^{[\varrho_{2}]}_{\alpha}(\lambda).$
\end{cor}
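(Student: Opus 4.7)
The plan is to deduce both forward and converse statements directly from Proposition~\ref{transfer} applied to $A = S_{\alpha}$, then specialize parameters and finally invert via Bayes' rule.

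First, taking $A = S_{\alpha}$ in Proposition~\ref{transfer} gives $S_{\alpha,\varrho_{2}}(\lambda + Y_{\varrho_{2},\varrho_{1}}(\lambda)) \overset{d}{=} S_{\alpha,\varrho_{1}}(\lambda)$, where $Y_{\varrho_{2},\varrho_{1}}(\lambda) \overset{d}{=} G_{\varrho_{2}-\varrho_{1}}/S_{\alpha,\varrho_{1}}(\lambda)$. In other words, the proof of Proposition~\ref{transfer} establishes the density identity
$$
\int_{0}^{\infty} f^{[\varrho_{2}]}_{\alpha}(t\,|\,\lambda+y)\,\mathbb{P}(Y_{\varrho_{2},\varrho_{1}}(\lambda)\in dy) = f^{[\varrho_{1}]}_{\alpha}(t\,|\,\lambda).
$$
Since by definition $\mathbb{P}^{[\varrho]}_{\alpha}(\lambda)=\int_{0}^{\infty}\mathrm{PD}(\alpha|t) f^{[\varrho]}_{\alpha}(t|\lambda)\,dt$, integrating both sides against $\mathrm{PD}(\alpha|t)$ and invoking Fubini yields the first assertion $\mathbb{E}[\mathbb{P}^{[\varrho_{2}]}_{\alpha}(\lambda+Y_{\varrho_{2},\varrho_{1}}(\lambda))] = \mathbb{P}^{[\varrho_{1}]}_{\alpha}(\lambda)$.

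For the second assertion I would simply set $\varrho_{1}=-\theta$ and $\lambda=0$. By (\ref{condden}) at $\lambda=0$, the density $f^{[-\theta]}_{\alpha}(t\,|\,0) = t^{-\theta}f_{\alpha}(t)/\mathbb{E}[S^{-\theta}_{\alpha}] = f_{\alpha,\theta}(t)$, which is exactly the density of $S_{\alpha,\theta}$, so $S_{\alpha,-\theta}(0)\overset{d}{=}S_{\alpha,\theta}$ and $\mathbb{P}^{[-\theta]}_{\alpha}(0)=\mathrm{PD}(\alpha,\theta)$. Substituting $\lambda=0$ in $Y_{\varrho_{2},-\theta}(\lambda)=G_{\varrho_{2}+\theta}/S_{\alpha,-\theta}(\lambda)$ yields $Y_{\varrho_{2},-\theta}(0) \overset{d}{=} G_{\varrho_{2}+\theta}/S_{\alpha,\theta}$, and the claim follows.

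The converse will be a Bayes/change-of-variables argument. Given $(P_{\ell})\sim \mathrm{PD}(\alpha,\theta)$ with the canonical coupling $(P_{\ell})\,|\,S_{\alpha,\theta}=t\sim \mathrm{PD}(\alpha|t)$, and with $G_{\varrho_{2}+\theta}$ independent of $(P_{\ell},S_{\alpha,\theta})$, I would compute the joint density of $(S_{\alpha,\theta}, Y)$ with $Y := G_{\varrho_{2}+\theta}/S_{\alpha,\theta}$ by transforming from $(S_{\alpha,\theta}, G_{\varrho_{2}+\theta})$ with Jacobian $t$, obtaining
$$
\mathbb{P}(S_{\alpha,\theta}\in dt, Y\in d\lambda) = \frac{(t\lambda)^{\varrho_{2}+\theta-1}{\mbox e}^{-t\lambda}}{\Gamma(\varrho_{2}+\theta)}\,t\,f_{\alpha,\theta}(t)\,dt\,d\lambda.
$$
Using $f_{\alpha,\theta}(t)=t^{-\theta}f_{\alpha}(t)/\mathbb{E}[S^{-\theta}_{\alpha}]$, the conditional density of $S_{\alpha,\theta}$ given $Y=\lambda$ is proportional to $t^{\varrho_{2}}{\mbox e}^{-\lambda t}f_{\alpha}(t)$, which after normalization equals $f^{[\varrho_{2}]}_{\alpha}(t\,|\,\lambda)$ from (\ref{condden}). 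Since $(P_{\ell})$ is conditionally independent of $Y$ given $S_{\alpha,\theta}$, mixing $\mathrm{PD}(\alpha|t)$ against this conditional gives $(P_{\ell})\,|\,Y=\lambda \sim \mathbb{P}^{[\varrho_{2}]}_{\alpha}(\lambda)$, as required.

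The only potentially delicate step is the converse: one must take care that $G_{\varrho_{2}+\theta}$ is introduced as independent of $(P_{\ell},S_{\alpha,\theta})$ so that the conditioning on the ratio $Y=\lambda$ acts on $(P_{\ell})$ only through the induced posterior of $S_{\alpha,\theta}$. Once this independence is in place, the computation is routine change-of-variables and the normalizing constants match automatically.
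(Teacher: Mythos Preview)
Your proposal is correct and follows exactly the route the paper intends: the paper offers no proof beyond the remark that the result is ``a special case of Proposition~\ref{transfer},'' and you have simply made that specialization explicit by taking $A=S_{\alpha}$, mixing the resulting density identity against $\mathrm{PD}(\alpha|t)$, and then handling the $\lambda=0$ specialization and the Bayes converse. The extra detail you supply for the converse (joint density of $(S_{\alpha,\theta},Y)$ via change of variables and conditional independence of $(P_{\ell})$ and $Y$ given $S_{\alpha,\theta}$) is not in the paper but is the natural unpacking of the same idea.
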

\subsection{The normalized generalized gamma case, and size biased mixing distributions}
Throughout, let $(\tau_{\alpha}(y):y\ge 0)$ denote a generalized gamma subordinator, such that the density of 
$S_{\alpha,0}(\lambda)\overset{d}=\tau_{\alpha}(\lambda^{\alpha})/\lambda$ is equivalent to $f^{[0]}_{\alpha}(t|\lambda)=
{\mbox e}^{\lambda^{\alpha}}{\mbox e}^{-\lambda t}f_{\alpha}(t),$ and a normalized generalized gamma process can be expressed as
\begin{equation}
\frac{\tau_{\alpha}(y\lambda^{\alpha})}{\tau_{\alpha}(\lambda^{\alpha})}\overset{d}=\sum_{k=1}^{\infty}P^{[0]}_{\ell}(\lambda)\mathbb{I}_{\{U_{k}\leq y\}}
\end{equation}
where $(P^{[0]}_{\ell}(\lambda))\sim\mathbb{P}^{[0]}_{\alpha}(\lambda).$ Furthermore setting $\rho_{\alpha,\lambda}(s)={\mbox e}^{-\lambda s}\rho_{\alpha}(s),$ there is the dual representation
$$
\mathbb{P}^{[0]}_{\alpha}(\lambda)=\int_{0}^{\infty}\mathrm{PD}(\alpha|t)f^{[0]}_{\alpha}(t|\lambda)dt:=\mathrm{PK}(\rho_{\alpha,\lambda}).
$$
The generalized gamma process and its normalized version appear in many applications. While $(\tau_{\alpha}(y):y\ge 0)$ is known to arise from the operation of exponentially tilting a stable subordinator, we see that within the species sampling context it arises from conditioning a $(P_{\ell})\sim\mathrm{PD}(\alpha,n)$ on the event $N_{S_{\alpha,n}}(\lambda)=n,$ for each $n=0,1,2,\ldots$
\begin{rem}Random variables having the distribution of $S_{\alpha,0}(\lambda)\overset{d}=\tau_{\alpha}(\lambda^{\alpha})/\lambda$ can be generated by the methods in \cite{DevroyeGen}.
\end{rem}
\subsection{Special properties of the $\mathbb{P}^{[m]}_{\alpha}(\lambda)$ distribution}
As in, \cite[pages 64-66]{Pit06}, see also\cite{Pit99}, let $K_{m}$ for $m=1,2,\ldots,$  denote the number of blocks in a random partition of $[m]=\{1,2,\ldots,m\}$ under a $\mathrm{PD}(\alpha,0)$ Chinese restaurant process scheme. Its probability mass function can be expressed as,
$$
\mathrm{P}_{\alpha}^{(m)}(k):=\mathbb{P}_{\alpha,0}(K_m = k)=\frac{\alpha^{k-1}\Gamma(k)}{\Gamma(m)} S_\alpha(m,k),
$$ 
with $S_\alpha(m,k) =  \frac{1}{\alpha^k k!} \sum_{j=1}^k (-1)^j \binom{k}{j} (-j\alpha)_m$ denoting the generalized Stirling number of the second kind. Hereafter set $\Omega_{0}(\lambda^{\alpha})=\lambda^{-\alpha}/\alpha,$ $\Omega_{1}(\lambda^{\alpha})=1$ and generally for $m=1,2,\ldots$
$$
\Omega_{m}(\lambda^{\alpha})=\Gamma(m)\times \sum_{\ell=1}^{m}\mathrm{P}^{(m)}_{\alpha,0}(\ell)
\frac{{(\lambda^{\alpha})}^{\ell-1}}{\Gamma(\ell)}.
$$

It is known through the relation between cumulants and moments of a generalized gamma variable that
\begin{equation}
\mathbb{E}[S^{m}_{\alpha}{\mbox e}^{-\lambda S_
{\alpha}}]=\alpha{\mbox e}^{-\lambda^{\alpha}}\lambda^{\alpha-m}\times {\Omega}_{m}(\lambda^{\alpha})
\label{momentrep}.
\end{equation}
with $\mathbb{E}[S_{\alpha}{\mbox e}^{-\lambda S_
{\alpha}}]=\alpha\lambda^{\alpha-1}{\mbox e}^{-\lambda^{\alpha}}.$ Then for $m=1,2,\ldots,$ there are the representations for the density of $S_{\alpha}|N_{S_{\alpha}}(\lambda)=m,$
\begin{equation}
{f}^{[m]}_{\alpha}(t|\lambda)=\frac{\lambda^{m-\alpha}{\mbox e}^{\lambda^{\alpha}}{\mbox e}^{-\lambda t}t^{m}f_{\alpha}(t)}{\alpha\Omega_{m}(\lambda^{\alpha})}=\frac{t^{m} \times (
{\mbox e}^{\lambda^{\alpha}}{\mbox e}^{-\lambda t}f_{\alpha}(t))}{\mathbb{E}\left[{\left(\frac{\tau_{\alpha}(\lambda^{\alpha})}{\lambda}\right)}^{m}\right]},
\label{condSU}
\end{equation}
which is the $m$-th size biased density of $\tau_{\alpha}(\lambda^{\alpha})/\lambda$. Furthermore there is the representation of the corresponding random variable
\begin{equation}
S_{\alpha,m}(\lambda)\overset{d}=\frac{\tau_{\alpha}\left(\lambda^{\alpha}+G_{\frac{m}{\alpha}-K_{m}(\lambda)}\right)}{\lambda}\overset{d}=\frac{\tau_{\alpha}(\lambda^{\alpha})+G_{{m}-K_{m}(\lambda)\alpha}}{\lambda}.
\label{Sm}
\end{equation}
where $K_{m}(\lambda)$ is the discrete random variable corresponding to the conditional distribution of $K_{m}|N_{S_{\alpha}}(\lambda)=m,$ which has probability mass function
\begin{equation}
\mathbb{P}_{\alpha,0}(K_{m}(\lambda)=k)=\frac{\mathrm{P}^{(m)}_{\alpha,0}(k)\frac{\lambda^{k\alpha}}{\Gamma(k)}}{\sum_{\ell=1}^{m}\mathrm{P}^{(m)}_{\alpha,0}(\ell)\frac{\lambda^{\ell\alpha}}{\Gamma(\ell)}}
\label{condKu}
\end{equation} 
We close with an expression for the density of $Y_{m,m-k}(\lambda)/\lambda$ which will play a role in the description of the size biased ordering of $(P_{\ell})\sim\mathbb{P}^{[m]}_{\alpha}(\lambda).$
\begin{lem}\label{Ymmk}
Consider $Y_{m,m-k}(\lambda)\overset{d}=G_{k}/S_{\alpha,m-k}(\lambda),$ satisfying $\mathbb{E}[\mathbb{P}^{[m]}_{\alpha}(\lambda+Y_{m,m-k}(\lambda))]=\mathbb{P}^{[m-k]}_{\alpha}(\lambda)$ for each $k=1,\ldots,m.$ Then $Y_{m,m-k}(\lambda)/\lambda$ has density 
\begin{equation}
\frac{\Omega_{m}(\lambda^{\alpha}{(1+y)}^{\alpha})}{\Gamma(k)\Omega_{m-k}(\lambda^{\alpha})}y^{k-1}{(1+y)}^{\alpha-m}{\mbox e}^{-[\lambda^{\alpha}{(1+y)}^{\alpha}-\lambda^{\alpha}]}.
\label{Ydensity}
\end{equation}
\end{lem}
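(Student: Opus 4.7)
The plan is to specialize the general density formula for $Y_{\varrho_2,\varrho_1}(\lambda)$ derived inside the proof of Proposition~\ref{transfer} to the case $A = S_\alpha$, $\varrho_2 = m$, $\varrho_1 = m-k$, and then substitute the generalized-gamma moment identity (\ref{momentrep}) into both the numerator and denominator. First, taking $\varrho_2-\varrho_1 = k$ directly gives
$$
\mathbb{P}(Y_{m,m-k}(\lambda)\in dy)/dy
=\frac{y^{k-1}\,\mathbb{E}[S_\alpha^{m}\,e^{-(\lambda+y)S_\alpha}]}
       {\Gamma(k)\,\mathbb{E}[S_\alpha^{m-k}\,e^{-\lambda S_\alpha}]}.
$$
Next, I would apply (\ref{momentrep}) to each expectation, noting that the convention $\Omega_0(\lambda^\alpha)=\lambda^{-\alpha}/\alpha$ makes that identity hold uniformly for all nonnegative integer orders; this accounts cleanly for the boundary case $k=m$ (so $m-k=0$).

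Substituting, the factors of $\alpha$ cancel and the two exponentials combine into $\exp\!\bigl(-[(\lambda+y)^\alpha-\lambda^\alpha]\bigr)$, yielding
$$
\mathbb{P}(Y_{m,m-k}(\lambda)\in dy)/dy
=\frac{y^{k-1}(\lambda+y)^{\alpha-m}\,\Omega_m((\lambda+y)^\alpha)}
      {\Gamma(k)\,\lambda^{\alpha-m+k}\,\Omega_{m-k}(\lambda^\alpha)}\,
      e^{-[(\lambda+y)^\alpha-\lambda^\alpha]}.
$$
Finally, I would perform the change of variable $u = y/\lambda$, using that the density of $Y/\lambda$ at $u$ equals $\lambda$ times the density of $Y$ at $\lambda u$. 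Writing $(\lambda+\lambda u)^{\alpha-m} = \lambda^{\alpha-m}(1+u)^{\alpha-m}$ and $(\lambda+\lambda u)^\alpha = \lambda^\alpha(1+u)^\alpha$, all powers of $\lambda$ telescope: $\lambda\cdot\lambda^{k-1}\cdot\lambda^{\alpha-m}/\lambda^{\alpha-m+k}=1$. The resulting expression is exactly (\ref{Ydensity}).

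The verification of the transfer identity $\mathbb{E}[\mathbb{P}^{[m]}_\alpha(\lambda+Y_{m,m-k}(\lambda))] = \mathbb{P}^{[m-k]}_\alpha(\lambda)$ is immediate from Proposition~\ref{transfer} with $\varrho_2=m,\varrho_1=m-k$, so no additional argument is needed there. The only real obstacle is bookkeeping: tracking the exponents of $\lambda$ through the substitution, and handling the $m-k=0$ case by confirming that the extended definition $\Omega_0(\lambda^\alpha)=\lambda^{-\alpha}/\alpha$ is consistent with $\mathbb{E}[e^{-\lambda S_\alpha}]=e^{-\lambda^\alpha}$ under the format of (\ref{momentrep}); both are routine.
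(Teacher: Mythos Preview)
Your proof is correct. The paper states Lemma~\ref{Ymmk} without proof, so there is no argument to compare against; your derivation is precisely the natural one implicit in the surrounding material, combining the density formula from the proof of Proposition~\ref{transfer} with the moment identity~(\ref{momentrep}) and a scaling change of variable, and your check that the convention $\Omega_0(\lambda^\alpha)=\lambda^{-\alpha}/\alpha$ makes~(\ref{momentrep}) valid at order zero correctly handles the boundary case $k=m$.
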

\begin{rem}Except for parametrization, the law of $K_{m}(\lambda),$~(\ref{condKu}), appears in~\cite[eq. (15), p. 1629]{ZhouFoF}, although $\lambda$'s relation to time is not considered in that context. This is also a special case of the distribution $\mathbb{P}(n(\pi)=k|U_{m}=\lambda)$ as described in~\cite[p. 85]{JLP2}, where $U_{m}\overset{d}=T_{m}$ in the general $\mathrm{PK}(\rho)$ setting.
\end{rem}

In the next sections we will describe results for the size biased re-arrangement of $(P_{\ell})|N_{S_{\alpha}}=m,$
where $(P_{\ell})\sim \mathrm{PD}(\alpha,0).$ Applications of Corollary~\ref{PDrecovery} will allow for representations in the general cases. We focus on the case of $m=0,$ corresponding to a normalized generalized gamma process, in Section~\ref{gg}. The case of $m=1$ in Section~\ref{sizegg} and general $m$ in Section~\ref{genm}.

\section{Stick-breaking representations for $(P_{\ell})\sim \mathrm{PD}(\alpha,\theta)$ given $N_{S_{\alpha,\theta}}(\lambda)=m$}

Suppose that $(\tilde{P}_{\ell})\sim\mathrm{GEM}(\alpha,\theta),$ then $F_{\alpha,\theta}(y):=\sum_{k=1}^{\infty}\tilde{P}_{\ell}\mathbb{I}_{\{\tilde{U}_{\ell}\leq y\}}$ is a Pitman-Yor process with parameters $(\alpha,\theta).$ The result in \cite{PPY92,Pit96,PY97} lead to the following exact decomposition
\begin{equation}
F_{\alpha,\theta}(y)=W_{1}F_{\alpha,\theta+\alpha}(y)+(1-W_{1})\mathbb{I}_{\{\tilde{U}_{1}\leq y\}}
\label{gemdecomp}
\end{equation}
where $F_{\alpha,\theta+\alpha}(y)=\sum_{k=2}^{\infty}(\tilde{P}_{\ell}/W_{1})\mathbb{I}_{\{\tilde{U}_{\ell}\leq y\}},$
and $(\tilde{P}_{\ell}/W_{1},\ell\ge 2)\sim\mathrm{GEM}(\alpha,\theta+\alpha),$ independent of the first size biased pick $(1-W_{1})\sim \mathrm{Beta}(1-\alpha,\theta+\alpha).$ Furthermore $\mathrm{Rank}((\tilde{P}_{\ell}/W_{1},\ell\ge 2))\sim\mathrm{PD}(\alpha,\theta+\alpha).$ This operation of \textit{size biased deletion} and ranking the normalized components is described in \cite[Chap 6]{PY97}. Repeated application of this procedure leads to a collection of mass partitions with laws 
$(\mathrm{PD}(\alpha,\theta+(k-1)\alpha)),k\ge 1).$ It suffices to  focus on the $\mathrm{PD}(\alpha,0)$ case where $\theta=0.$ These facts, as established in~\cite{PPY92}, may be derived from the corresponding  sequence of random variables $(S_{\alpha,(k-1)\alpha},k\ge 1),$ forming a Markov Chain satisfying $W_{k}= S_{\alpha,k\alpha}/S_{\alpha,(k-1)\alpha}$ and where for any $n,$ 
$(W_{1},\ldots,W_{n})$ is independent of $S_{\alpha,n\alpha},$ with $S_{\alpha}:=S_{\alpha,0}=S_{\alpha,n\alpha}/\prod_{j=1}^{n}W_{i}.$ In other words the joint law of $(W_{1},\ldots,W_{n},S_{\alpha,n\alpha})$ is given by,
\begin{equation}
\left[\prod_{k=1}^{n}f_{B_{k}}(w_{k})\right]f_{\alpha,n\alpha}(s)ds
\label{JointWn}
\end{equation}
where $f_{B_{k}}$ denotes the density of a $\mathrm{Beta}(k\alpha,1-\alpha)$ variable, and 
$f_{\alpha,n\alpha}(s)=s^{-n\alpha}f_{\alpha}(s)/\mathbb{E}[S^{-n\alpha}_{\alpha}]$ is the density of $S_{\alpha,n\alpha}.$ 
Precisely (\ref{JointWn}) can be written as 
$$
\frac{\alpha^{n-1}\Gamma(n)}{[\Gamma(1-\alpha)]^{n}\Gamma(n\alpha)}\prod_{k=1}^{n}w^{k\alpha-1}_{k}{(1-w_{k})}^{-\alpha}\times \frac{\alpha\Gamma(n\alpha)}{\Gamma(n)}s^{-n\alpha}f_{\alpha}(s).
$$
\begin{rem}See \cite[Section 5]{JamesLamp} for interpretations of~(\ref{gemdecomp}) in connection with the results in 
\cite[Theorem 1.3.1]{PY92} and \cite[Theorem 3.8, Lemma 3.11]{PPY92}.
\end{rem}
\subsection{An expression for the density of $(W_{1},\ldots,W_{n}),S_{\alpha,n\alpha}|N_{S_{\alpha}}(\lambda)=m$}
\begin{lem}\label{Wdensity}
Suppose that $(P_{\ell})\sim \mathrm{PD}(\alpha,0)$ with corresponding $(W_{1},\ldots,W_{n},S_{\alpha,n\alpha})$ having joint density in~(\ref{JointWn}). Then for each $m=0,1,2,\ldots,$
\begin{enumerate}
\item[(i)] $(P_{\ell})|N_{S_{\alpha}}(\lambda)=m$ has law $\mathbb{P}^{[m]}_{\alpha}(\lambda)=\int_{0}^{\infty}\mathrm{PD}(\alpha|t)f^{[m]}_{\alpha}(t|\lambda)dt.$
\item[(ii)]The (conditional) joint density of $(W_{1},\ldots,W_{n},S_{\alpha,n\alpha})|N_{S_{\alpha}}(\lambda)=m,$ can be expressed as, for each $n\ge 1,$
\begin{equation}
q_{m}(s/\prod_{i=1}^{n}w_{i}|\lambda)\left[\prod_{k=1}^{n}f_{B_{k}}(w_{k})\right]f_{\alpha,n\alpha}(s)
\label{jointVGm}
\end{equation}
where $q_{m}(t|\lambda)f_{\alpha}(t)=f^{[m]}_{\alpha}(t|\lambda).$ That is, 
\begin{equation}
q_{m}(t|\lambda)=\frac{\lambda^{m-\alpha}t^{m}{\mbox e}^{-[\lambda t-\lambda^{\alpha}]}}
{\alpha\Omega_{m}(\lambda^{\alpha})}.
\label{mweight}
\end{equation}
\end{enumerate}
\end{lem}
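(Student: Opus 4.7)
My plan is to treat (i) as a direct corollary of the Poisson--Kingman framework and then to obtain (ii) by a straightforward Bayes calculation, in which the key computational input is the moment identity (\ref{momentrep}).

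For part (i), recall from Section~\ref{PK} that $(P_\ell)\sim\mathrm{PD}(\alpha,0)=\mathrm{PK}(\rho_\alpha)$ with mixing variable $S_\alpha$, and that the law of $N_{S_\alpha}(\lambda)$ depends on $(P_\ell)$ only through $S_\alpha$. Hence by elementary conditioning the conditional law of $(P_\ell)$ given $\{N_{S_\alpha}(\lambda)=m,S_\alpha=t\}$ coincides with $\mathrm{PD}(\alpha|t)$, while the marginal conditional law of $S_\alpha$ given $N_{S_\alpha}(\lambda)=m$ is $f^{[m]}_\alpha(t|\lambda)$ by (\ref{condden}). Mixing then gives the stated $\mathbb{P}^{[m]}_\alpha(\lambda)$.

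For part (ii), the plan is to apply Bayes' rule to the enlarged vector $(W_1,\ldots,W_n,S_{\alpha,n\alpha})$ using the unconditional density (\ref{JointWn}). The critical observation is that, just as in (i), the conditional distribution of $N_{S_\alpha}(\lambda)$ given $(W_1,\ldots,W_n,S_{\alpha,n\alpha})$ depends on these variables only through $S_\alpha=S_{\alpha,n\alpha}/\prod_{i=1}^n W_i$, since the waiting times $T_r=G_r/S_\alpha$ are built from $S_\alpha$ and an independent exponential sequence. Given $S_\alpha=t$, the variable $N_{S_\alpha}(\lambda)$ is Poisson with mean $\lambda t$, so the Radon--Nikodym density of the conditional law of $(W_1,\ldots,W_n,S_{\alpha,n\alpha})$ given $N_{S_\alpha}(\lambda)=m$ with respect to the unconditional law equals
\[
\frac{\mathbb{P}(N_{S_\alpha}(\lambda)=m\mid S_\alpha=s/\prod_{i=1}^n w_i)}{\mathbb{P}(N_{S_\alpha}(\lambda)=m)}
=\frac{(s/\prod w_i)^m\,e^{-\lambda s/\prod w_i}}{\mathbb{E}[S^{m}_\alpha e^{-\lambda S_\alpha}]},
\]
valid for $m\ge 1$ (and giving $e^{\lambda^\alpha-\lambda s/\prod w_i}$ for $m=0$ via the Laplace transform $\mathbb{E}[e^{-\lambda S_\alpha}]=e^{-\lambda^\alpha}$).

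The final step is to identify this factor with $q_m(s/\prod w_i\mid\lambda)$ as defined in (\ref{mweight}). Substituting the moment representation (\ref{momentrep}), $\mathbb{E}[S^{m}_\alpha e^{-\lambda S_\alpha}]=\alpha e^{-\lambda^\alpha}\lambda^{\alpha-m}\Omega_m(\lambda^\alpha)$, into the denominator yields exactly
\[
\frac{\lambda^{m-\alpha}(s/\prod w_i)^m e^{-[\lambda s/\prod w_i-\lambda^\alpha]}}{\alpha\,\Omega_m(\lambda^\alpha)},
\]
which is $q_m(s/\prod w_i\mid\lambda)$, and the boundary cases $m=0$ (using $\Omega_0(\lambda^\alpha)=\lambda^{-\alpha}/\alpha$) and $m=1$ (using $\Omega_1(\lambda^\alpha)=1$) agree trivially. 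Multiplying this factor by the unconditional joint density (\ref{JointWn}) produces (\ref{jointVGm}), completing (ii). The relation $q_m(t|\lambda)f_\alpha(t)=f^{[m]}_\alpha(t|\lambda)$ is then just a rewriting of (\ref{condden}), and one can sanity check the whole formula by integrating out $(w_1,\ldots,w_n)$ and verifying one recovers $f^{[m]}_\alpha(s|\lambda)$ marginally. There is no genuine obstacle; the only care required is in correctly combining the Poisson likelihood with (\ref{momentrep}) and in noting that the Poisson conditioning factors through $S_\alpha$ alone.
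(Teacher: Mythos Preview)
Your proof is correct and takes essentially the same approach as the paper's: both rely on the fact that conditioning on $N_{S_\alpha}(\lambda)=m$ factors through $S_\alpha=S_{\alpha,n\alpha}/\prod_i W_i$ alone, with the paper invoking the Markovian structure of \cite{PPY92} and the Poisson--Kingman mixing of \cite{Pit02} where you instead carry out the Bayes computation explicitly via the Poisson likelihood and the moment identity~(\ref{momentrep}). One minor slip in your closing sanity-check remark: integrating out $(w_1,\ldots,w_n)$ from (\ref{jointVGm}) yields the conditional marginal of $S_{\alpha,n\alpha}$, not of $S_\alpha$ (for the latter you would first change variables to $t=s/\prod_i w_i$), but this does not affect the main argument.
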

\begin{proof}This is a consequence of the results in \cite{PPY92}, and the extension of those results to the Poisson Kingman distributions with mixing, in the $\alpha$-stable case, as described in \cite{Pit02}. Where the mixing distribution is 
$q_{m}(t|\lambda)f_{\alpha}(t)=f^{[m]}_{\alpha}(t|\lambda).$ Otherwise, the inhomogeneous Markovian structure of 
$S_{\alpha,k\alpha}|S_{\alpha,(k-1)\alpha}=s$ for $k=1,2,\ldots,$ as established in~\cite{PPY92}, leads to a description of $(W_{1},\ldots,W_{n},S_{\alpha,n\alpha})|S_{\alpha}=s,N_{S_{\alpha}}(\lambda)=m,$ which just depends on $S_{\alpha}=s.$ The result is realized by then mixing over  $f^{[m]}_{\alpha}(s|\lambda),$ which is the density of $S_{\alpha}|N_{S_{\alpha}}(\lambda)=m.$
\end{proof}
\subsection{The variables $(R_{1}(\lambda),\dots,R_{n}(\lambda),n \ge 2)$}
Throughout, let $(\mathbf{e}_{i})$ denote a sequence of independent $\mathrm{exponential}~(1)$ variables.
Define partial sums $\tilde{G}_{k}=\sum_{i=1}^{k}\mathbf{e}_{i},$ with $\tilde{G}_{0}=0.$
Define for $k=1,2,\dots,$
\begin{equation}
R_{k}(\lambda):={\left(\frac{\tilde{G}_{k-1}+\lambda^{\alpha}}{\tilde{G}_{k}+\lambda^{\alpha}}\right)}^{\frac{1}{\alpha}}
\label{Rrep}
\end{equation}
The next result provides an important component to the stick-breaking representation and analysis. The proof follows  from elementary conditioning arguments and is omitted.
\begin{lem}\label{Rdist}Let for $k\ge 1,$ $R_{k}:=R_{k}(\lambda)$ be random variables described in~(\ref{Rrep}), then
given any $\lambda>0$ 
\begin{enumerate}
\item[(i)]$R_{1}$ has density 
$$
f_{R_{1}}(r_{1}|\lambda)=\alpha\lambda^{\alpha}{\mbox e}^{\lambda^{\alpha}}{\mbox e}^{-\lambda^{\alpha}/r^{\alpha}_{1}}r^{-\alpha-1}_{1}.
$$
\item[(ii)]$R_{j}|R_{1}=r_{1},\ldots,R_{j-1}=r_{j-1}$ has density $f_{R_{1}}(r_{j}|\lambda/\prod_{i=1}^{j-1}r_{i})$
\item[(iii)]Hence $(R_{1},\dots,R_{n})$ has joint density,
$$
f_{R_{1},\ldots,R_{n}}(\mathbf{r}_{n})=\alpha^{n}\lambda^{n\alpha}{\mbox e}^{\lambda^{\alpha}}{\mbox e}^{-\lambda^{\alpha}/(\prod_{l=1}^{n}r_{l})^{\alpha}}\prod_{l=1}^{n}r^{-(n-l+1)\alpha-1}_{l}
$$
\end{enumerate}
Furthermore,
$
\lambda^{\alpha}/{\prod_{l=1}^{k}R^{\alpha}_{l}}=\tilde{G}_{k}+\lambda^{\alpha}.
$
\end{lem}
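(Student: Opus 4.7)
The plan is to reduce everything to a one-step change of variables, using the telescoping identity at the end of the lemma as the organizing principle. First I would verify that identity directly: since $R_l^{\alpha} = (\tilde G_{l-1}+\lambda^{\alpha})/(\tilde G_l + \lambda^{\alpha})$ by definition, the product $\prod_{l=1}^{k} R_l^{\alpha}$ telescopes to $\lambda^{\alpha}/(\tilde G_k+\lambda^{\alpha})$, which rearranges to the stated identity. This is the key bridge, because it implies that conditional on $R_1,\dots,R_{j-1}$ the quantity $\tilde G_{j-1}+\lambda^{\alpha}$ is completely determined and equals $\lambda^{\alpha}/\prod_{i<j} R_i^{\alpha}$. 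Writing $\lambda_{j-1}^{\alpha} := \tilde G_{j-1}+\lambda^{\alpha}$, we get $R_j = (\lambda_{j-1}^{\alpha}/(\lambda_{j-1}^{\alpha}+\mathbf{e}_j))^{1/\alpha}$, which has the same functional form as $R_1$ with $\lambda$ replaced by $\lambda_{j-1}$.

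Next, I would derive (i) by a direct change of variables from $\mathbf{e}_1$ to $R_1$. Solving $R_1^{\alpha}=\lambda^{\alpha}/(\lambda^{\alpha}+\mathbf{e}_1)$ gives $\mathbf{e}_1 = \lambda^{\alpha}(r_1^{-\alpha}-1)$ with Jacobian $\alpha\lambda^{\alpha}r_1^{-\alpha-1}$; since $\mathbf{e}_1$ has density $e^{-\mathbf{e}_1}$ on $(0,\infty)$ (so $r_1\in(0,1)$), the shift $-(-\lambda^{\alpha})=+\lambda^{\alpha}$ in the exponent yields precisely the claimed $e^{\lambda^{\alpha}}e^{-\lambda^{\alpha}/r_1^{\alpha}}$ factor. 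Part (ii) is then immediate: because $\mathbf{e}_j$ is independent of $(\mathbf{e}_1,\dots,\mathbf{e}_{j-1})$ and the conditioning on $R_1,\dots,R_{j-1}$ is equivalent to conditioning on $\lambda_{j-1}$, the same computation as in (i) applied at level $\lambda_{j-1}$ gives the conditional density $f_{R_1}(r_j\,|\,\lambda/\prod_{i<j}r_i)$.

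For (iii) I would simply multiply the conditional densities from (i) and (ii). The product $\prod_{j=1}^{n} f_{R_1}(r_j\,|\,\lambda_{j-1})$ splits into a prefactor $\alpha^{n}\prod_j \lambda_{j-1}^{\alpha}$, monomials $\prod_j r_j^{-\alpha-1}$, and exponentials $\prod_j e^{\lambda_{j-1}^{\alpha}-\lambda_{j-1}^{\alpha}/r_j^{\alpha}}$. The exponential factors telescope via $\lambda_{j-1}^{\alpha}/r_j^{\alpha} = \lambda_j^{\alpha}$, collapsing to $e^{\lambda^{\alpha}-\lambda^{\alpha}/\prod_l r_l^{\alpha}}$. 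For the polynomial part, $r_i^{\alpha}$ appears with exponent $-1$ inside $\lambda_{j-1}^{\alpha}=\lambda^{\alpha}/\prod_{i<j}r_i^{\alpha}$ for every $j>i$, giving a net contribution of $-(n-i)\alpha$ on $r_i$; combined with the $r_j^{-\alpha-1}$ factor, the total exponent on $r_l$ is $-(n-l+1)\alpha-1$, matching the stated joint density exactly.

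There is no real conceptual obstacle here; the main risk is purely bookkeeping, namely keeping track of which $r_i$ gets contributions from which $\lambda_{j-1}^{\alpha}$ factor when assembling the exponent $-(n-l+1)\alpha-1$. Invoking the telescoping identity at the outset and consistently substituting $\lambda_{j-1}^{\alpha}=\lambda^{\alpha}/\prod_{i<j}r_i^{\alpha}$ should make the calculation routine. This explains why the authors are comfortable omitting the proof.
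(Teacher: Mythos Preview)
Your proposal is correct and matches what the paper intends: the authors omit the proof entirely, stating only that it ``follows from elementary conditioning arguments,'' and your change-of-variables plus telescoping argument is precisely that elementary computation. There is no alternative approach to compare against.
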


\subsection{Case $m=0,$ $\mathbb{P}^{[0]}_{\alpha}(\lambda),$the normalized generalized gamma case}\label{gg}
In the case where $(P_{\ell})|N_{S_{\alpha}}(\lambda)=0\sim \mathbb{P}^{[0]}_{\alpha}(\lambda),$ having the law of the normalized jumps of a generalized gamma process, the joint density of 
$(W_{1},\ldots,W_{n},S_{\alpha,n\alpha})|N_{S_{\alpha}}(\lambda)=0,$ can be expressed as,
\begin{equation}
\frac{{\mbox e}^{-\frac{\lambda s}{\prod_{i=1}^{n}w_{i}}}{\mbox e}^{\lambda^{\alpha}}\alpha^{n}}{[\Gamma(1-\alpha)]^{n}}\prod_{k=1}^{n}w^{k\alpha-1}_{k}{(1-w_{k})}^{-\alpha}\times s^{-n\alpha}f_{\alpha}(s).
\label{joint1}
\end{equation}
The key to obtaining a simple description of the $(W_{i})$ in terms of random variables is facilitated by the following identity.

\begin{prop}\label{integral1}
The integral
$$
\int_{0}^{w_{1}}\cdots\int_{0}^{w_{n}}
\lambda^{n\alpha}
{\mbox e}^{-\frac{\lambda s}{(\prod_{l=1}^{n}r_{l})}}\prod_{l=1}^{n}(w_{l}-r_{l})^{\alpha-1}r^{-(n-l+1)\alpha-1}_{l}dr_{n}\ldots dr_{1}
$$
is equal to
$$
{[\Gamma(\alpha)]}^{n}s^{-n\alpha}{\mbox e}^{-\frac{\lambda s}{(\prod_{l=1}^{n}w_{l})}}\prod_{l=1}^{n}w^{l\alpha-1}_{l}.
$$

\end{prop}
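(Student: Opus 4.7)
The plan is to prove the identity by induction on $n \ge 1$, with the innermost integration reducing to a one-dimensional ``power times stretched exponential'' integral that evaluates to a Gamma function. The structure of the integrand is set up so that the innermost integration introduces exactly the factors needed to convert the $n$-fold integral into an $(n-1)$-fold integral of the same form.

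\textbf{Base case $n=1$.} I would verify
$$
\int_{0}^{w_{1}} \lambda^{\alpha}\, e^{-\lambda s/r_{1}} (w_{1}-r_{1})^{\alpha-1} r_{1}^{-\alpha-1}\, dr_{1} \;=\; \Gamma(\alpha)\, s^{-\alpha} e^{-\lambda s/w_{1}} w_{1}^{\alpha-1}
$$
by the substitution $v = w_{1}/r_{1} - 1$, equivalently $r_{1} = w_{1}/(1+v)$, so that $dr_{1} = -w_{1}(1+v)^{-2}\,dv$, $w_{1}-r_{1} = w_{1} v/(1+v)$, and $e^{-\lambda s/r_{1}} = e^{-\lambda s(1+v)/w_{1}}$. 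A direct check shows that all factors of $(1+v)$ cancel, leaving the integrand as $\lambda^{\alpha} w_{1}^{-1} v^{\alpha-1} e^{-\lambda s(1+v)/w_{1}}$. Pulling out $e^{-\lambda s/w_{1}}$ and evaluating $\int_{0}^{\infty} v^{\alpha-1} e^{-\lambda s v/w_{1}}\, dv = \Gamma(\alpha)(w_{1}/(\lambda s))^{\alpha}$ produces the claimed right-hand side.

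\textbf{Inductive step.} Assume the identity for $n-1$. Holding $r_{1},\ldots,r_{n-1}$ fixed in the $n$-fold integral, the integrand in the innermost variable $r_{n}$ is precisely the $n=1$ integrand with $s$ replaced by $s' := s/\prod_{l=1}^{n-1} r_{l}$. Applying the base case, the $r_{n}$-integration produces
$$
\Gamma(\alpha)\, \lambda^{-\alpha} s^{-\alpha}\Bigl(\prod_{l=1}^{n-1} r_{l}^{\alpha}\Bigr) e^{-\lambda s/(w_{n}\prod_{l=1}^{n-1} r_{l})} w_{n}^{\alpha-1}.
$$
The crucial bookkeeping is that the extra $\prod_{l=1}^{n-1} r_{l}^{\alpha}$ combines with the original $r_{l}^{-(n-l+1)\alpha-1}$ to give $r_{l}^{-(n-l)\alpha-1} = r_{l}^{-((n-1)-l+1)\alpha-1}$, which is exactly the exponent pattern of the identity at level $n-1$ with $s$ replaced by $s/w_{n}$. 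Invoking the induction hypothesis on the remaining $(n-1)$-fold integral and collecting $w_{n}$-powers via $w_{n}^{\alpha-1}\cdot w_{n}^{(n-1)\alpha} = w_{n}^{n\alpha-1}$ yields $[\Gamma(\alpha)]^{n} s^{-n\alpha} e^{-\lambda s/\prod_{l=1}^{n} w_{l}} \prod_{l=1}^{n} w_{l}^{l\alpha-1}$, as desired.

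\textbf{Main obstacle.} There is no conceptual difficulty; the substance of the argument is in the base case, which is a single elementary substitution. The most error-prone step is the exponent bookkeeping in the inductive reduction: one must verify that the $\prod r_{l}^{\alpha}$ gained from the base case combined with $\lambda^{-\alpha}$ and $s^{-\alpha}$ leaves the residual $(n-1)$-fold integral in \emph{exactly} the claimed form (with $s\mapsto s/w_{n}$), so that the induction closes without stray factors. Once this matching is confirmed, the products of $\Gamma(\alpha)$'s, the $s^{-\alpha}$ exponents, and the $w_{n}^{l\alpha-1}$ powers all assemble as required.
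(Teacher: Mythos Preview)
Your proof is correct and follows essentially the same approach as the paper's own proof: both isolate the one-variable identity $\lambda^{\alpha}\int_{0}^{w}(w-r)^{\alpha-1}e^{-\lambda t/(vr)}r^{-\alpha-1}\,dr=\Gamma(\alpha)v^{\alpha}t^{-\alpha}w^{\alpha-1}e^{-\lambda t/(vw)}$ (your base case, obtained via the same substitution the paper hints at) and then iterate it from the innermost variable $r_{n}$ outward, with your induction hypothesis playing the role of the paper's ``repeated usage'' and the exponent shift $r_{l}^{-(n-l+1)\alpha-1}\to r_{l}^{-((n-1)-l+1)\alpha-1}$ being exactly the mechanism in both arguments.
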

\begin{proof}
For positive quantities $(v,t,\lambda,w)$ there is the integral identity,
$$
\Gamma(\alpha)v^{\alpha}t^{-\alpha}w^{\alpha-1}{\mbox e}^{-\frac{\lambda t}{vw}}=
\lambda^{\alpha}\int_{0}^{w}{(w-r)}^{\alpha-1}{\mbox e}^{-\frac{\lambda t}{vr}}r^{-\alpha-1}dr,
$$
which can be obtained by the change of variables $r=1/y$ and then $s=rx-1$ leading to a gamma integral.
The result follows by repeated usage of this identity starting with $r_{n}$ and initially setting $v=\prod_{i=1}^{n-1}r_{i}.$ Then integrate with respect to $r_{n-1}$, setting $v=w_{n}\prod_{i=1}^{n-2}r_{i}$ and so on.
\end{proof}
\subsubsection{Descriptions of $(W_{1},\ldots,W_{n})|N_{S_{\alpha}}(\lambda)=0$}
\begin{lem}\label{jointWzero}The joint density of $(W_{1},\ldots,W_{n})|N_{S_{\alpha}}(\lambda)=0,$ can be expressed as
$$
\left[\prod_{k=1}^{n}\frac{{(1-w_{k})}^{\alpha-1}}{\Gamma(1-\alpha)\Gamma(\alpha)}\right]\int_{0}^{w_{1}}\cdots\int_{0}^{w_{n}}
\prod_{l=1}^{n}(w_{l}-r_{l})^{\alpha-1}f_{R_{1},\ldots,R_{n}}(\mathbf{r}_{n})dr_{n}\ldots dr_{1}
$$
\end{lem}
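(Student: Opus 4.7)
The plan is to begin from the joint density of $(W_1,\ldots,W_n,S_{\alpha,n\alpha})$ given $N_{S_\alpha}(\lambda)=0$ recorded in~(\ref{joint1}), marginalize out $s$, and then rewrite the resulting $s$-integral via Proposition~\ref{integral1} so that, after swapping orders of integration, the joint density $f_{R_1,\ldots,R_n}$ from Lemma~\ref{Rdist}(iii) appears inside.

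First I would integrate~(\ref{joint1}) over $s\in(0,\infty)$, factoring out the $s$-free pieces $\prod_k(1-w_k)^{-\alpha}\prod_l w_l^{l\alpha-1}$ together with the constant $\alpha^n e^{\lambda^\alpha}/[\Gamma(1-\alpha)]^n$. This leaves the single kernel $\int_0^\infty e^{-\lambda s/\prod_i w_i}s^{-n\alpha}f_\alpha(s)\,ds$ to handle. Next I would invoke Proposition~\ref{integral1}, reading the identity from right to left: it re-expresses the combination $s^{-n\alpha}e^{-\lambda s/\prod_l w_l}\prod_l w_l^{l\alpha-1}$ as an $n$-fold integral over $(r_1,\ldots,r_n)\in(0,w_1)\times\cdots\times(0,w_n)$ of $\lambda^{n\alpha}e^{-\lambda s/\prod_l r_l}\prod_l(w_l-r_l)^{\alpha-1}r_l^{-(n-l+1)\alpha-1}$, with $[\Gamma(\alpha)]^{-n}$ out front.

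Interchanging the $s$- and $r$-integrations by Fubini (every factor is nonnegative), the inner $s$-integral reduces to the Laplace transform of $S_\alpha$, namely $\int_0^\infty e^{-\lambda s/\prod_l r_l}f_\alpha(s)\,ds=e^{-\lambda^\alpha/\prod_l r_l^\alpha}$. The integrand in $(r_1,\ldots,r_n)$ now reads $\lambda^{n\alpha}e^{-\lambda^\alpha/\prod_l r_l^\alpha}\prod_l r_l^{-(n-l+1)\alpha-1}\,(w_l-r_l)^{\alpha-1}$, and comparison with Lemma~\ref{Rdist}(iii) shows this equals $\prod_l(w_l-r_l)^{\alpha-1}\cdot f_{R_1,\ldots,R_n}(\mathbf{r}_n)/(\alpha^n e^{\lambda^\alpha})$.

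Finally I would collect constants: the extracted $\alpha^n e^{\lambda^\alpha}$ from the first step cancels the $\alpha^n e^{\lambda^\alpha}$ implicit in $f_{R_1,\ldots,R_n}$, the $[\Gamma(\alpha)]^n$ from Proposition~\ref{integral1} combines with $[\Gamma(1-\alpha)]^n$ to produce the stated denominator $[\Gamma(1-\alpha)\Gamma(\alpha)]^n$, and the original $\prod_k(1-w_k)^{-\alpha}$ survives in front of the integral, yielding the claimed representation. The main obstacle is purely bookkeeping — keeping straight how the $\alpha^n$, $\lambda^{n\alpha}$, $e^{\lambda^\alpha}$, and Gamma factors reassemble after the Laplace transform step — while the use of Fubini and the Laplace identity $\mathbb{E}[e^{-uS_\alpha}]=e^{-u^\alpha}$ are routine.
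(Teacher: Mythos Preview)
Your proposal is correct and follows essentially the same route as the paper: substitute the integral identity of Proposition~\ref{integral1} (read right to left) into the joint density~(\ref{joint1}), integrate against $f_{\alpha}(s)$ using the stable Laplace transform $\mathbb{E}[e^{-uS_{\alpha}}]=e^{-u^{\alpha}}$, and then identify $f_{R_{1},\ldots,R_{n}}$ from Lemma~\ref{Rdist}(iii) after rearranging constants. Your extra mentions of Fubini and the explicit tracking of the $\alpha^{n}$, $e^{\lambda^{\alpha}}$ and Gamma factors are just the bookkeeping the paper leaves implicit in ``rearrange terms''.
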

\begin{proof} Replace $s^{-n\alpha}{\mbox e}^{-\frac{\lambda s}{(\prod_{l=1}^{n}w_{l})}}\prod_{l=1}^{n}w^{l\alpha-1}_{l}$ in (\ref{joint1}) with the integral in Proposition~\ref{integral1}. Integrate with respect to $f_{\alpha}(s)$ and rearrange terms.

\end{proof}
\begin{thm}\label{Wcase00}Let $(P^{[0]}_{\ell}(\lambda))\sim\mathbb{P}^{[0]}_{\alpha}(\lambda),$ corresponding in distribution to the ranked masses in $\mathcal{P}_{\infty}$ of the normalized jumps of a generalized gamma subordinator . Then the joint distribution of $(\tilde{P}_{\ell}(\lambda)),$ the size biased rearrangement of $(P^{[0]}_{\ell}(\lambda)),$ can be expressed as 
$$
(\tilde{P}_{1}(\lambda),\tilde{P}_{2}(\lambda),\ldots)=(1-W_{1},(1-W_{2})W_{1},\ldots)
$$
that is $\tilde{P}_{\ell}(\lambda)=(1-W_{\ell})\prod_{i=1}^{\ell-1}W_{i},$ where the  $(W_{k})$ are dependent random variables represented as
\begin{equation}
W_{k}=1-\beta^{(k)}_{({1-\alpha},\alpha)}[1-R_{k}(\lambda)]
\label{weightszero}
\end{equation}
for $(\beta^{(k)}_{(1-\alpha,\alpha)})$ iid $\mathrm{Beta}(1-\alpha,\alpha)$ variables independent of the $R_{k}(\lambda)$ defined  in (\ref{Rrep}) and Lemma~\ref{Rdist}.
\end{thm}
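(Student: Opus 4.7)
The plan is to match, for every fixed $n \ge 1$, the joint density of $(W_1,\ldots,W_n)$ produced by the proposed representation $W_k = 1 - \beta^{(k)}_{(1-\alpha,\alpha)}[1 - R_k(\lambda)]$ with the conditional joint density displayed in Lemma~\ref{jointWzero}. By Lemma~\ref{Wdensity}(ii), the ratios $W_k = S_{\alpha,k\alpha}/S_{\alpha,(k-1)\alpha}$ conditioned on $N_{S_\alpha}(\lambda)=0$ are already identified, via the iterative decomposition~\eqref{gemdecomp}, as the stick-breaking weights of the size-biased permutation of $(P^{[0]}_\ell(\lambda))$, so it suffices to verify that the two finite-dimensional laws agree for every $n$.

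The first step is to compute the conditional density of $W_k$ given $R_k = r_k$. Since $\beta^{(k)}$ is independent of all other $\beta^{(j)}$ and of $(R_1,\ldots,R_n)$, and $W_k$ depends only on $(\beta^{(k)},R_k)$, the coordinates of $(W_1,\ldots,W_n)$ are conditionally independent given $(R_1,\ldots,R_n)$. A one-dimensional change of variables $\beta = (1-w)/(1-r)$ with Jacobian $1/(1-r)$, applied to the $\mathrm{Beta}(1-\alpha,\alpha)$ density, yields after cancellation of the $(1-r_k)$ factors
\[
f_{W_k \mid R_k}(w_k \mid r_k) \;=\; \frac{(1-w_k)^{-\alpha}\,(w_k - r_k)^{\alpha-1}}{\Gamma(1-\alpha)\Gamma(\alpha)}, \qquad r_k \le w_k \le 1.
\]
The second step is to integrate the product $\prod_k f_{W_k\mid R_k}(w_k\mid r_k)$ against $f_{R_1,\ldots,R_n}(\mathbf{r}_n)$ from Lemma~\ref{Rdist}(iii); pulling the purely $w_k$-dependent factors outside the $\mathbf{r}$-integrals produces precisely the expression displayed in Lemma~\ref{jointWzero}.

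The main obstacle is really just the careful bookkeeping in this change of variables and the subsequent integration, neither of which is deep. All of the structural ingredients are already in place upstream: Proposition~\ref{integral1} is what inserts the auxiliary $(R_k)$ variables into the conditional density of Lemma~\ref{jointWzero}, Lemma~\ref{Rdist} identifies their joint law, and Lemma~\ref{Wdensity} ties the $W_k$ to the PPY92 Markov-chain stick-breaking construction. No separate infinite-dimensional step is needed, since agreement of all finite-dimensional densities gives the stick-breaking identity $\tilde{P}_\ell(\lambda) = (1-W_\ell)\prod_{i=1}^{\ell-1} W_i$ in distribution for the entire sequence.
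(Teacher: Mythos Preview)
Your proposal is correct and is essentially the same argument as the paper's, run in the opposite direction: the paper starts from the integral in Lemma~\ref{jointWzero}, reads the integrand as a joint density of $(W_1,\ldots,W_n,R_1,\ldots,R_n)$, and identifies the conditional law of $W_k$ given $R_k=r_k$ as that of $1-\beta^{(k)}_{1-\alpha,\alpha}(1-r_k)$; you instead build the joint density from the representation and match it with Lemma~\ref{jointWzero}. Both rest on the same one-line change of variables giving $f_{W_k\mid R_k}(w_k\mid r_k)=(1-w_k)^{-\alpha}(w_k-r_k)^{\alpha-1}/[\Gamma(1-\alpha)\Gamma(\alpha)]$.
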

\begin{proof}Augmenting the integral expression in Lemma~\ref{jointWzero} shows that the $W_{1},\ldots,W_{n}|R_{1}(\lambda)=r_{1},\ldots, R_{n}(\lambda)=r_{n},N_{S_{\alpha}}(\lambda)=0$ has joint density
$$
\left[\prod_{k=1}^{n}\frac{{(1-w_{k})}^{\alpha-1}}{\Gamma(1-\alpha)\Gamma(\alpha)}\right]
\prod_{l=1}^{n}(w_{l}-r_{l})^{\alpha-1}$$
for $r_{i}<w_{i}<1,$ $i=1,\ldots,n.$ Which leads to the description in~(\ref{weightszero}).
\end{proof}
\begin{cor}
For each fixed $k,$ $1-W_{k}$ has the distribution of the first size-biased pick from a
$\mathbb{E}[\mathbb{P}^{[0]}_{\alpha}({(\tilde{G}_{k-1}+\lambda^{\alpha})}^{1/\alpha})]$ mass partition
$$(P_{l,k-1}(\lambda),\ell\ge 1):=\mathrm{Rank}((\tilde{P}_{\ell+k-1}(\lambda)/\prod_{i=1}^{k-1}W_{i},\ell\ge 1)),$$ with $W_{0}:=1,$ and $(P_{\ell,0}(\lambda),\ell\ge 1)\overset{d}=(P^{[0]}_{\ell}(\lambda))\sim\mathbb{P}^{[0]}_{\alpha}(\lambda).$
\end{cor}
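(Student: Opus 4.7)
The plan is to combine the explicit form of $1-W_{k}$ supplied by Theorem~\ref{Wcase00} with the Markov-chain character of the auxiliary variables $R_{k}(\lambda)$ in Lemma~\ref{Rdist}, and then identify the ranked tail $(P_{\ell,k-1}(\lambda))$ by an inductive application of the size-biased decomposition~(\ref{gemdecomp}) adapted to the exponentially tilted $\mathbb{P}^{[0]}_{\alpha}$ setting.

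First, using (\ref{weightszero}), I would write $1 - W_{k} = \beta^{(k)}_{(1-\alpha,\alpha)}(1 - R_{k}(\lambda))$ and condition on $\tilde{G}_{k-1}$, setting $\mu := (\tilde{G}_{k-1} + \lambda^{\alpha})^{1/\alpha}$. By Lemma~\ref{Rdist}(ii), the conditional density of $R_{k}(\lambda)$ given $\tilde{G}_{k-1}$ coincides with that of $R_{1}(\mu)$ at parameter $\mu$ from Lemma~\ref{Rdist}(i). Since $\beta^{(k)}_{(1-\alpha,\alpha)}$ is independent of $\tilde{G}_{k-1}$ and of all of the $R_{i}$'s, applying the $k=1$ case of Theorem~\ref{Wcase00} with parameter $\mu$ in place of $\lambda$ shows that, conditional on $\tilde{G}_{k-1}$, the random variable $1 - W_{k}$ is distributed as the first size-biased pick of a $\mathbb{P}^{[0]}_{\alpha}(\mu)$ mass partition.

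The main obstacle is to upgrade this distributional identity to a genuine identification of $1 - W_{k}$ as the first size-biased pick of the ranked tail $(P_{\ell,k-1}(\lambda))$, conditional on $\tilde{G}_{k-1}$. I would handle this by induction on $k$: the base case $k=1$ is Theorem~\ref{Wcase00} itself. For the inductive step, I would apply the analog of (\ref{gemdecomp}) to the previously identified $\mathbb{P}^{[0]}_{\alpha}(\mu_{k-2})$-distributed tail, and track the update of the effective tilt parameter from $\mu_{k-2}$ to $\mu_{k-1}$ via Lemma~\ref{Rdist}(ii) and the identity $\lambda^{\alpha}/\prod_{l=1}^{k-1}R_{l}^{\alpha} = \tilde{G}_{k-1} + \lambda^{\alpha}$. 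At the level of joint densities this propagation can be verified by marginalizing (\ref{jointVGm}) with $m = 0$ over $(W_{1},\ldots,W_{k-1},\tilde{G}_{k-1})$ and matching the result against Lemma~\ref{Wdensity}(ii) applied at parameter $\mu_{k-1}$; a change of variables in the spirit of Proposition~\ref{integral1} should close the gap. Once the conditional identification is in place, integrating against $\tilde{G}_{k-1} \sim \mathrm{gamma}(k-1)$ produces the asserted mixture law $\mathbb{E}[\mathbb{P}^{[0]}_{\alpha}((\tilde{G}_{k-1} + \lambda^{\alpha})^{1/\alpha})]$ for $(P_{\ell,k-1}(\lambda))$ and identifies $1 - W_{k}$ as its first size-biased pick.
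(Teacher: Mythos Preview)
Your proposal is correct and follows essentially the same route as the paper: both rely on the structure of $(W_{k},R_{k}(\lambda))$ from Theorem~\ref{Wcase00} together with the Markov property in Lemma~\ref{Rdist} and the identity $\lambda^{\alpha}/\prod_{l=1}^{k-1}R_{l}^{\alpha}=\tilde{G}_{k-1}+\lambda^{\alpha}$ to identify $(P_{\ell,k-1}(\lambda))\sim\mathbb{E}[\mathbb{P}^{[0]}_{\alpha}(\lambda/\prod_{i=1}^{k-1}R_{i}(\lambda))]$. The paper's version is terser and does not spell out the induction or the density-marginalization check you propose; your extra step of marginalizing~(\ref{jointVGm}) and invoking Proposition~\ref{integral1} is not needed, since the conditional identification already drops out of Theorem~\ref{Wcase00} once you observe that $(W_{j},R_{j}(\lambda))_{j\ge k}$ given $R_{1},\ldots,R_{k-1}$ has exactly the joint law of $(W_{j},R_{j}(\mu))_{j\ge 1}$ at parameter $\mu=\lambda/\prod_{i=1}^{k-1}R_{i}$.
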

\begin{proof}This follows from the description of the $(W_{k},R_{k}(\lambda))$ in Theorem \ref{Wcase00}, which leads to
$(P_{\ell,1}(\lambda),\ell\ge 1)\sim \mathbb{E}[\mathbb{P}^{[0]}_{\alpha}(\lambda/R_{1}(\lambda))]$ and more generally
$(P_{\ell,k-1}(\lambda),\ell\ge 1)\sim \mathbb{E}[\mathbb{P}^{[0]}_{\alpha}(\lambda/\prod_{i=1}^{k-1}R_{i}(\lambda))].$
\end{proof}

\subsubsection{Recovering the Pitman-Yor- $\mathrm{PD}(\alpha,\theta)$ case for $\theta
\ge 0$}
We now show how to recover the stick-breaking result in the (Pitman-Yor) $\mathrm{PD}(\alpha,\theta)$ case for $\theta\ge 0,$ in terms of $(R_{k}).$
\begin{cor}When $\lambda^{\alpha}=G_{\theta/\alpha}$ for $\theta\ge0,$
$$
\mathbb{E}[\mathbb{P}^{[0]}_{\alpha}(G^{1/\alpha}_{\theta/\alpha})]=\mathrm{PD}(\alpha,\theta)
$$ 
The corresponding,  
$$
R_{k}(G^{1/\alpha}_{\theta/\alpha}):={\left(\frac{\tilde{G}_{k-1}+G_{\theta/\alpha}}{\tilde{G}_{k}+G_{\theta/\alpha}}\right)}^{\frac{1}{\alpha}}.
$$
are now collections of independent beta distributed variables where one can set,
$R_{k}(G^{1/\alpha}_{\theta/\alpha}):=\beta_{\theta+(k-1)\alpha,1}\sim \mathrm{Beta}(\theta+(k-1)\alpha,1).$
Hence
$$
1-W_{k}:=\beta^{(k)}_{1-\alpha,\alpha}[1-\beta_{\theta+(k-1)\alpha,1}]=\beta_{1-\alpha,\theta+k\alpha}
$$
and are independent. Recall that $\mathrm{PD}(\alpha,0)=\mathbb{P}^{[0]}_{\alpha}(0)$, in this case $R_{1}(0)=0,$ and $R_{k}(0)=\beta_{(k-1)\alpha,1},$ for $k\ge 2.$
\end{cor}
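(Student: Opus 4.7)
The plan breaks into three pieces. First, I would establish the mixture identity $\mathbb{E}[\mathbb{P}^{[0]}_{\alpha}(G^{1/\alpha}_{\theta/\alpha})]=\mathrm{PD}(\alpha,\theta)$ directly. Since both sides are $\mathrm{PD}(\alpha|t)$-mixtures, it suffices to show that the induced mixing density on $S_{\alpha}$ coincides with $f_{\alpha,\theta}(t)$. The change of variables $g=\lambda^{\alpha}$ gives the density of $G^{1/\alpha}_{\theta/\alpha}$ as $\alpha\lambda^{\theta-1}{\mbox e}^{-\lambda^{\alpha}}/\Gamma(\theta/\alpha)$, and multiplying by $f^{[0]}_{\alpha}(t|\lambda)={\mbox e}^{\lambda^{\alpha}}{\mbox e}^{-\lambda t}f_{\alpha}(t)$ cancels the ${\mbox e}^{\lambda^{\alpha}}$ factor and leaves the Gamma integral $\int_{0}^{\infty}\lambda^{\theta-1}{\mbox e}^{-\lambda t}d\lambda=\Gamma(\theta)t^{-\theta}$. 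Combined with the well-known $\mathbb{E}[S^{-\theta}_{\alpha}]=\Gamma(\theta/\alpha)/(\alpha\Gamma(\theta))$, this reproduces $f_{\alpha,\theta}(t)=t^{-\theta}f_{\alpha}(t)/\mathbb{E}[S^{-\theta}_{\alpha}]$ exactly. (Alternatively, one may derive the identity from Corollary~\ref{PDrecovery} with $\varrho_{1}=-\theta$, $\varrho_{2}=0$ after identifying $G_{\theta}/S_{\alpha,\theta}\overset{d}=G^{1/\alpha}_{\theta/\alpha}$.)

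Second, I would analyse the joint law of the $R_{k}(G^{1/\alpha}_{\theta/\alpha})$ through the beta-gamma algebra. Setting $T_{0}:=G_{\theta/\alpha}$ and $T_{k}:=T_{k-1}+\mathbf{e}_{k}\sim\mathrm{Gamma}(\theta/\alpha+k)$, one has $R_{k}=(T_{k-1}/T_{k})^{1/\alpha}$. The classical fact that for independent Gammas $T_{k-1}/T_{k}\sim\mathrm{Beta}((\theta+(k-1)\alpha)/\alpha,1)$ is independent of $T_{k}$, iterated across $k$, yields mutual independence of the ratios; and a simple Jacobian shows that raising a $\mathrm{Beta}(a,1)$ variable to the $1/\alpha$ power produces a $\mathrm{Beta}(a\alpha,1)$ variable. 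Hence the $R_{k}(G^{1/\alpha}_{\theta/\alpha})$ are independent $\mathrm{Beta}(\theta+(k-1)\alpha,1)$ variables. The main subtle point is that for fixed $\lambda$, Lemma~\ref{Rdist} gives only a Markovian (not independent) family; independence materializes only after randomizing $\lambda^{\alpha}=G_{\theta/\alpha}$, because the random initial shift merges with the partial sums $\tilde{G}_{k}$ into the genuinely independent-increments structure of a gamma subordinator.

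Third, Theorem~\ref{Wcase00} gives $1-W_{k}=\beta^{(k)}_{(1-\alpha,\alpha)}[1-R_{k}]$ with independent factors, and $1-R_{k}\sim\mathrm{Beta}(1,\theta+(k-1)\alpha)$. Representing both factors through independent Gammas with shapes $1-\alpha,\alpha,\theta+(k-1)\alpha$ via $\beta^{(k)}_{(1-\alpha,\alpha)}=G_{1-\alpha}/(G_{1-\alpha}+G_{\alpha})$ and $1-R_{k}\overset{d}=(G_{1-\alpha}+G_{\alpha})/(G_{1-\alpha}+G_{\alpha}+G_{\theta+(k-1)\alpha})$, the product telescopes to $G_{1-\alpha}/(G_{1-\alpha}+G_{\alpha}+G_{\theta+(k-1)\alpha})\sim\mathrm{Beta}(1-\alpha,\theta+k\alpha)$, the known $\mathrm{GEM}(\alpha,\theta)$ stick-breaking weight; independence across $k$ is inherited from the independence of the $R_{k}$'s and of the $\beta^{(k)}$'s. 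The degenerate $\theta=0$ case is the boundary $G_{0}\equiv 0$: then $R_{1}(0)=0$ (so $1-W_{1}=\beta^{(1)}_{(1-\alpha,\alpha)}\sim\mathrm{Beta}(1-\alpha,\alpha)$), while $R_{k}(0)=(\tilde{G}_{k-1}/\tilde{G}_{k})^{1/\alpha}\sim\mathrm{Beta}((k-1)\alpha,1)$ for $k\ge 2$, and the same product identity applies to recover the $\mathrm{PD}(\alpha,0)$ stick-breaking.
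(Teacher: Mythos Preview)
Your argument is correct and considerably more detailed than the paper's own proof, which is essentially two sentences: it cites \cite[Proposition~21]{PY97} for the generalized gamma subordination identity, and then notes that Corollary~\ref{PDrecovery} gives the same conclusion via $Y_{0,-\theta}(0)=G_{\theta}/S_{\alpha,\theta}\overset{d}=G^{1/\alpha}_{\theta/\alpha}$. Your first step includes exactly this latter route as an aside, but you also supply a direct Gamma-integral verification that the mixing density collapses to $f_{\alpha,\theta}$, which the paper does not write out.

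Where you go further is in the second and third parts. The paper does not explicitly prove the independence and Beta laws of the $R_{k}(G^{1/\alpha}_{\theta/\alpha})$ or the product identity for $1-W_{k}$; it treats these as read off from the cited references once the mixture identity is in hand. Your beta--gamma algebra argument (ratios $T_{k-1}/T_{k}$ independent of $T_{k}$, then the $1/\alpha$-power Jacobian, then the telescoping product $\mathrm{Beta}(1-\alpha,\alpha)\cdot\mathrm{Beta}(1,\theta+(k-1)\alpha)\overset{d}=\mathrm{Beta}(1-\alpha,\theta+k\alpha)$) is a clean, self-contained substitute for the external citation and makes the mechanism of how randomizing $\lambda^{\alpha}=G_{\theta/\alpha}$ converts the Markovian $(R_{k}(\lambda))$ into an independent sequence fully transparent. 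The only minor care point is that the shared-gamma representation you invoke for the product is a distributional coupling, not the almost-sure variables from Theorem~\ref{Wcase00}; since you are only after the law of $1-W_{k}$ and the independence across $k$ follows from the independence of the pairs $(\beta^{(k)}_{1-\alpha,\alpha},R_{k})$, this is harmless.
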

\begin{proof} Since $\mathbb{P}^{[0]}_{\alpha}(\lambda)$ corresponds to the generalized gamma case this can be seen as an immediate consequence of \cite[Proposition 21]{PY97}. However from Corollary~\ref{PDrecovery}, we can also use the fact that 
$Y_{0,-\theta}(0):=G_{\theta}/S_{\alpha,\theta}\overset{d}=G^{1/\alpha}_{\theta/\alpha}.$
\end{proof}

\subsection{Case $m=1,$  $(P^{[1]}_{\ell}(\lambda))\sim\mathbb{P}^{[1]}_{\alpha}(\lambda)$}\label{sizegg}
For $(P_{\ell})\sim \mathrm{PD}(\alpha,0)$ recall that $(P_{\ell})|N_{S_{\alpha}}(\lambda)=1$ or equivalently $(P_{\ell})|G_{1}/S_{\alpha}=\lambda,$ has the law of a mass partition $(P^{[1]}_{\ell}(\lambda))\sim\mathbb{P}^{[1]}_{\alpha}(\lambda),$ where
$$
\mathbb{P}^{[1]}_{\alpha}(\lambda)=\frac{\lambda^{1-\alpha}}{\alpha}{\mbox e}^{\lambda^{\alpha}}\int_{0}^{\infty}\mathrm{PD}(\alpha|t)t{\mbox e}^{-\lambda t}f_{\alpha}(t) dt
$$

Since it is known that $Y_{1,0}(0):=G_{1}/S_{\alpha}\overset{d}=\mathbf{e}^{1/\alpha}_{1},$ it follows that 
$(P^{[1]}_{\ell}(\mathbf{e}^{1/\alpha}_{1}))\sim \mathrm{PD}(\alpha,0).$ The next result shows how $\mathbb{P}^{[0]}_{\alpha}(\lambda)$ can be expressed in terms of $\mathbb{P}^{[1]}_{\alpha},$ in general

\begin{prop}\label{onetozero}Let $Y_{1,0}(\lambda)=\lambda G_{1}/\tau_{\alpha}(\lambda^{\alpha}),$ then
\begin{enumerate}
\item[(i)]$Y_{1,0}(\lambda)\overset{d}={(\lambda^{\alpha}+\mathbf{e}_{1})}^{1/\alpha}-\lambda.$
\item[(ii)]
$
\mathbb{E}[\mathbb{P}^{[1]}_{\alpha}({(\lambda^{\alpha}+\mathbf{e}_{1})}^{1/\alpha})]=\mathbb{P}^{[0]}_{\alpha}(\lambda).
$
\item[(iii)]$(P^{[1]}_{\ell}({(\lambda^{\alpha}+\mathbf{e}_{1})}^{1/\alpha}))\sim \mathbb{P}^{[0]}_{\alpha}(\lambda).$
\item[(iv)]Furthermore $\lambda/R_{1}(\lambda)\overset{d}={(\lambda^{\alpha}+\mathbf{e}_{1})}^{1/\alpha}$
\end{enumerate}
\end{prop}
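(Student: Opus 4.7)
The plan is to prove (i) by a direct density calculation, observe that (iv) is an algebraic identity, and then derive (ii) and (iii) as immediate corollaries of Corollary~\ref{PDrecovery}.

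For part (i), I would start from the representation $Y_{1,0}(\lambda) = G_{1}/S_{\alpha,0}(\lambda)$, where $S_{\alpha,0}(\lambda)$ has density $f^{[0]}_{\alpha}(t|\lambda) = e^{\lambda^{\alpha}}e^{-\lambda t}f_{\alpha}(t)$. Conditioning on $S_{\alpha,0}(\lambda)=t$ makes $Y_{1,0}(\lambda)$ exponential with rate $t$, so integrating yields
\[
f_{Y_{1,0}(\lambda)}(y) \;=\; \int_{0}^{\infty} t\, e^{-yt}\, f^{[0]}_{\alpha}(t|\lambda)\, dt \;=\; e^{\lambda^{\alpha}}\,\mathbb{E}\bigl[S_{\alpha}\, e^{-(\lambda+y)S_{\alpha}}\bigr].
\]
Invoking the $m=1$ case of the cumulant identity (\ref{momentrep}), namely $\mathbb{E}[S_{\alpha} e^{-\mu S_{\alpha}}] = \alpha \mu^{\alpha-1} e^{-\mu^{\alpha}}$, this becomes $\alpha(\lambda+y)^{\alpha-1} e^{\lambda^{\alpha}-(\lambda+y)^{\alpha}}$. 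On the other side, writing $V := (\lambda^{\alpha}+\mathbf{e}_{1})^{1/\alpha} - \lambda$, the change of variables $x = (v+\lambda)^{\alpha}-\lambda^{\alpha}$ applied to the density $e^{-x}$ of $\mathbf{e}_{1}$ gives the same expression for the density of $V$. Matching the two densities proves (i).

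Part (iv) is immediate from the definition (\ref{Rrep}): substituting $\tilde{G}_{0}=0$ and $\tilde{G}_{1}=\mathbf{e}_{1}$ yields $R_{1}(\lambda) = \lambda/(\lambda^{\alpha}+\mathbf{e}_{1})^{1/\alpha}$, hence $\lambda/R_{1}(\lambda) = (\lambda^{\alpha}+\mathbf{e}_{1})^{1/\alpha}$. Then (ii) follows by specializing Corollary~\ref{PDrecovery} to $\varrho_{1}=0$, $\varrho_{2}=1$, which gives $\mathbb{E}[\mathbb{P}^{[1]}_{\alpha}(\lambda+Y_{1,0}(\lambda))] = \mathbb{P}^{[0]}_{\alpha}(\lambda)$, and replacing $\lambda+Y_{1,0}(\lambda)$ by the equal-in-distribution variable $(\lambda^{\alpha}+\mathbf{e}_{1})^{1/\alpha}$ from (i). Part (iii) is the corresponding pathwise-distributional statement for the mass partition, which is equivalent to (ii) at the level of the law on $\mathcal{P}_{\infty}$.

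There is no real obstacle; the only substantive computation is the density match in (i), which reduces cleanly via the moment identity (\ref{momentrep}) for $m=1$. The main care is bookkeeping the exponents and the $e^{\lambda^{\alpha}}$ normalization; everything else is an immediate substitution into results already established (Corollary~\ref{PDrecovery} and the definition of $R_{1}(\lambda)$).
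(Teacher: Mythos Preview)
Your proposal is correct and follows essentially the same approach as the paper: part (i) via the Laplace transform of $S_{\alpha,0}(\lambda)$, and parts (ii)--(iii) as direct consequences of Corollary~\ref{PDrecovery}. The only cosmetic difference is that the paper computes the survival function $\mathbb{P}(Y_{1,0}(\lambda)>y)=\mathbb{E}[e^{-yS_{\alpha,0}(\lambda)}]=e^{-[(\lambda+y)^{\alpha}-\lambda^{\alpha}]}$ directly and matches it with $\mathbb{P}(\mathbf{e}_{1}>(\lambda+y)^{\alpha}-\lambda^{\alpha})$, which avoids invoking (\ref{momentrep}) and the change-of-variables step; your density computation is the derivative of this and works just as well. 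The paper omits (iv) from its proof entirely, so your explicit verification from the definition (\ref{Rrep}) is a welcome addition.
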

\begin{proof}Using Laplace transforms, $\mathbb{P}(Y_{1,0}(\lambda)>y)={\mbox e}^{-[{(\lambda+y)}^{\alpha}-\lambda^{\alpha}]},$ yielding [(i)]. Statements [(ii)] and [(iii)] are equivalent and follow from Corollary~\ref{PDrecovery} since $\lambda+ Y_{1,0}(\lambda)\overset{d}= {(\lambda^{\alpha}+\mathbf{e}_{1})}^{1/\alpha}.$
\end{proof}

\begin{thm}\label{TheoremP1} If $(P^{[1]}_{\ell}(\lambda))\sim\mathbb{P}^{[1]}_{\alpha}(\lambda),$ then the sequence $(\tilde{P}_{\ell}(\lambda)),$ obtained by size-biased sampling from $(P^{[1]}_{\ell}(\lambda)),$
can be represented as $\tilde{P}_{k}(\lambda)=(1-{W}_{k})\prod_{l=1}^{k-1}W_{l},$ 
with the following properties;
\begin{enumerate}
\item[(i)]The  $(W_{k})$ are generally dependent random variables represented as
\begin{equation}
W_{k}:={[\beta^{(k)}_{1-\alpha,\alpha}[(1-R_{k}(\lambda))/R_{k}(\lambda)]+1]}^{-1}
 \label{weightsone}
\end{equation}
for $(\beta^{(k)}_{(1-\alpha,\alpha)})$ iid $\mathrm{Beta}(1-\alpha,\alpha)$ variables independent of the $R_{k}(\lambda)$ defined  in (\ref{Rrep}).
\item[(ii)] Hence given $(R_{1}=r_{1},R_{2}=r_{2},\ldots),$ the $(W_{k})$ are conditionally independent. In particular $W_{k}|R_{1}=r_{1},\ldots,R_{k}=r$ equates in distribution to $1/(\beta^{(k)}_{1-\alpha,\alpha}[(1-r)/r]+1)$ with density,
\begin{equation}
\frac{\Gamma(1)}{\Gamma(1-\alpha)\Gamma(\alpha)}{w^{-1}{(1-w)}^{-\alpha}{(w-r)}^{\alpha-1}r^{1-\alpha}},
\label{1overb}
\end{equation}
 $0<r<w<1.$
\end{enumerate}  
\end{thm}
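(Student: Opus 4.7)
The plan is to mirror the strategy used to prove Theorem~\ref{Wcase00}, accounting for the extra factor carried by the $m=1$ weight $q_1(t|\lambda)$. By Lemma~\ref{Wdensity}(ii) with $m=1$, and using $\Omega_1(\lambda^\alpha)=1$, we have $q_1(t|\lambda)=(\lambda^{1-\alpha}t/\alpha)\,q_0(t|\lambda)$, so the joint density of $(W_1,\ldots,W_n,S_{\alpha,n\alpha})\mid N_{S_\alpha}(\lambda)=1$ equals
\[
\frac{\lambda^{1-\alpha}}{\alpha}\cdot\frac{s}{\prod_{i=1}^n w_i}\,\times\,\{\text{$m=0$ joint density of }(\ref{joint1})\}.
\]
Thus the problem reduces to applying Proposition~\ref{integral1} to the ``$m=0$ piece'' to introduce auxiliary variables $(r_1,\ldots,r_n)$, and then carrying the extra factor $(\lambda^{1-\alpha}s)/(\alpha\prod w_i)$ through the integration against $f_\alpha(s)$.

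First I would apply Proposition~\ref{integral1} to rewrite $s^{-n\alpha}e^{-\lambda s/\prod w_l}\prod w_l^{l\alpha-1}$ as an integral over $r_1,\ldots,r_n$. The extra $s$ from the $m=1$ factor forces me to replace the basic Laplace transform identity by
\[
\int_0^\infty s\,e^{-\mu s}f_\alpha(s)\,ds=-\frac{d}{d\mu}\,e^{-\mu^\alpha}=\alpha\mu^{\alpha-1}e^{-\mu^\alpha},\qquad \mu=\lambda/\prod_{l=1}^n r_l.
\]
Collecting the resulting powers of $\lambda$ and of $r_l$ gives the joint density of $(W_1,\ldots,W_n,R_1,\ldots,R_n)\mid N_{S_\alpha}(\lambda)=1$ in the factored form $f_{R_1,\ldots,R_n}(\mathbf{r}_n)\cdot\prod_{k=1}^n f(w_k\mid r_k)$, where $f_{R_1,\ldots,R_n}$ is the density of Lemma~\ref{Rdist}(iii) and $f(w_k\mid r_k)$ agrees with~(\ref{1overb}) on $r_k<w_k<1$. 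This simultaneously delivers claim~(ii) and the conditional independence of the $W_k$ given the $R_k$.

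To obtain the closed-form representation~(\ref{weightsone}), I would carry out the change of variable $w=1/\bigl(b(1-r)/r+1\bigr)$ with $b\sim\mathrm{Beta}(1-\alpha,\alpha)$ and $r$ fixed: solving yields $b=r(1-w)/[(1-r)w]$ with Jacobian $|db/dw|=r/[(1-r)w^2]$, and direct substitution into the $\mathrm{Beta}(1-\alpha,\alpha)$ density reproduces~(\ref{1overb}) exactly. The size-biased stick-breaking identity $\tilde{P}_k(\lambda)=(1-W_k)\prod_{l=1}^{k-1}W_l$ is inherited from the unconditional $\mathrm{PD}(\alpha,0)$ decomposition~(\ref{gemdecomp}), since conditioning on $N_{S_\alpha}(\lambda)=1$ alters only the joint law of the $(W_k)$, not the algebraic structure of the decomposition.

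The main obstacle will be the bookkeeping in the middle step: tracking how the extra factor $s/\prod w_i$ shifts the exponents of $r_l$ (from $-(n-l+1)\alpha-1$ to $-(n-l+2)\alpha$) and of $w_k$ (from $k\alpha-1$ to $k\alpha-2$) through the integral identity, so that after the marginal $R$-density is peeled off, the surviving conditional factor $w_k^{-1}(1-w_k)^{-\alpha}(w_k-r_k)^{\alpha-1}r_k^{1-\alpha}/[\Gamma(1-\alpha)\Gamma(\alpha)]$ emerges cleanly. Everything else proceeds along the same lines as the $m=0$ case.
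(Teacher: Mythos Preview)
Your proposal is correct and follows essentially the same approach as the paper: the paper's proof simply invokes the forthcoming Lemma~\ref{jointWm} at $m=1$, and what you outline---applying Proposition~\ref{integral1} to the joint density~(\ref{jointm}), integrating against $s\,f_\alpha(s)$ via $\mathbb{E}[S_\alpha e^{-\mu S_\alpha}]=\alpha\mu^{\alpha-1}e^{-\mu^\alpha}$, and rearranging into the factored form $f_{R_1,\ldots,R_n}(\mathbf{r}_n)\prod_k f(w_k\mid r_k)$---is precisely the proof of that lemma specialized to $m=1$ (where $\Omega_1\equiv 1$ makes the mixture weight trivial). Your explicit change-of-variable check for~(\ref{weightsone}) is a nice addition that the paper leaves implicit.
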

\begin{proof}The result follows as an immediate consequence of the forthcoming Lemma~\ref{jointWm} in the case of $m=1.$
\end{proof}

\begin{cor}
For each fixed $k,$ $1-W_{k}$ has the distribution of the first size-biased pick from a
$\mathbb{E}[\mathbb{P}^{[1]}_{\alpha}({(\tilde{G}_{k-1}+\lambda^{\alpha})}^{1/\alpha})]$ mass partition

$$(P_{\ell,k-1}(
\lambda),\ell\ge 1):=\mathrm{Rank}(\tilde{P}_{\ell+k-1}(\lambda)/\prod_{i=1}^{k-1}W_{i}),$$ 

with $W_{0}:=1.$ Furthermore,
$$
\mathbb{E}[\mathbb{P}^{[1]}_{\alpha}({(\tilde{G}_{k-1}+\lambda^{\alpha})}^{1/\alpha})]:=\mathbb{E}[\mathbb{P}^{[0]}_{\alpha}({(\tilde{G}_{k-2}+\lambda^{\alpha})}^{1/\alpha})],
$$
for $k=2,3,\ldots.$ In particular $(P_{\ell,0},\ell\ge 1)\sim\mathbb{P}^{[1]}_{\alpha}(\lambda)$ and $(P_{\ell,1},\ell\ge 1)\sim\mathbb{P}^{[0]}_{\alpha}(\lambda).$
\end{cor}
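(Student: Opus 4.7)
The plan is to mimic the argument used in the corollary following Theorem~\ref{Wcase00}, now exploiting the $m=1$ stick-breaking structure of Theorem~\ref{TheoremP1} together with the distributional identity of Proposition~\ref{onetozero}. First, by the general size-biased deletion principle behind~(\ref{gemdecomp}), the sequence $(\tilde{P}_{\ell+k-1}(\lambda)/\prod_{i=1}^{k-1}W_{i},\ell\ge 1)$ is by construction a size-biased permutation of its ranked version $(P_{\ell,k-1}(\lambda),\ell\ge 1)$, with first pick equal to $1-W_{k}$. So the identification of $1-W_{k}$ as a first size-biased pick of $(P_{\ell,k-1}(\lambda))$ is built into the definition, and the substantive task is to identify the law of the ranked remainder.

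Next I would exploit the Markov chain structure embedded in the $(R_{k})$ described in Lemma~\ref{Rdist}, together with the conditional independence given $(R_{1},\ldots,R_{k-1})$ of the $(W_{j})$ apparent from Theorem~\ref{TheoremP1}(ii). From the joint density analysis behind Theorem~\ref{TheoremP1} (the $m=1$ analog of Lemma~\ref{jointWzero}, via the weighting $q_{1}(t|\lambda)$ from Lemma~\ref{Wdensity}(ii)), the conditional law of the tail given $(R_{1},\ldots,R_{k-1})$ depends on these only through the product $\prod_{i=1}^{k-1}R_{i}$, and is precisely a $\mathbb{P}^{[1]}_{\alpha}(\lambda/\prod_{i=1}^{k-1}R_{i}(\lambda))$ mass partition. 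By the final identity of Lemma~\ref{Rdist}, $\lambda/\prod_{i=1}^{k-1}R_{i}(\lambda)=(\tilde{G}_{k-1}+\lambda^{\alpha})^{1/\alpha}$, so averaging over $\tilde{G}_{k-1}$ gives $(P_{\ell,k-1}(\lambda))\sim \mathbb{E}[\mathbb{P}^{[1]}_{\alpha}((\tilde{G}_{k-1}+\lambda^{\alpha})^{1/\alpha})]$, as claimed.

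For the second assertion, the identity $\mathbb{E}[\mathbb{P}^{[1]}_{\alpha}((\tilde{G}_{k-1}+\lambda^{\alpha})^{1/\alpha})]=\mathbb{E}[\mathbb{P}^{[0]}_{\alpha}((\tilde{G}_{k-2}+\lambda^{\alpha})^{1/\alpha})]$ for $k\ge 2$ will follow directly from Proposition~\ref{onetozero}(ii). Writing $\tilde{G}_{k-1}=\tilde{G}_{k-2}+\mathbf{e}_{k-1}$ with $\mathbf{e}_{k-1}$ an independent standard exponential, conditioning on $\tilde{G}_{k-2}$, and applying Proposition~\ref{onetozero}(ii) with the parameter $\lambda$ replaced by $(\tilde{G}_{k-2}+\lambda^{\alpha})^{1/\alpha}$ converts the inner $\mathbb{P}^{[1]}_{\alpha}$ at the shifted rate into $\mathbb{P}^{[0]}_{\alpha}$ at the original shifted rate. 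The special cases $(P_{\ell,0}(\lambda))\sim \mathbb{P}^{[1]}_{\alpha}(\lambda)$ (the trivial $k=1$ case of no deletion) and $(P_{\ell,1}(\lambda))\sim \mathbb{P}^{[0]}_{\alpha}(\lambda)$ (the $k=2$ case, where $\tilde{G}_{0}=0$ so Proposition~\ref{onetozero}(ii) applies immediately) are then immediate.

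The main obstacle lies in the first part: cleanly identifying the conditional law of the tail after $k-1$ size-biased deletions as $\mathbb{P}^{[1]}_{\alpha}((\tilde{G}_{k-1}+\lambda^{\alpha})^{1/\alpha})$. In the $m=0$ case this falls out of the factorization in Lemma~\ref{jointWzero} and Proposition~\ref{integral1}. Here, because of the extra size-biasing factor $q_{1}(s/\prod_{i}w_{i}|\lambda)\propto (s/\prod_{i}w_{i})\,e^{-\lambda s/\prod_{i}w_{i}}$ in Lemma~\ref{Wdensity}(ii), one must track how this weight transforms under the change of variables used in Proposition~\ref{integral1} and confirm that what remains, after integrating out the auxiliary $(r_{1},\ldots,r_{k-1})$ and $s$, is exactly the mixing density $f^{[1]}_{\alpha}(t|(\tilde{G}_{k-1}+\lambda^{\alpha})^{1/\alpha})$ in the $t$-variable governing the remainder. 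I expect this to be a bookkeeping calculation parallel to the $m=0$ development rather than requiring a fundamentally new idea.
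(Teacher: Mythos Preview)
Your proposal is correct and follows essentially the same approach as the paper: the paper's proof simply invokes ``the structure of $(W_{k},R_{k}(\lambda))$ in Theorem~\ref{TheoremP1}'' for the first assertion and Proposition~\ref{onetozero} for the relation to $\mathbb{P}^{[0]}_{\alpha}$, which is exactly your plan. The one remark is that the ``main obstacle'' you flag is easier than you suggest: once Theorem~\ref{TheoremP1} gives $W_{j}$ as a function of an independent beta and $R_{j}(\lambda)$, and Lemma~\ref{Rdist}(ii) shows $(R_{k},R_{k+1},\ldots)\mid(R_{1},\ldots,R_{k-1})$ has the same law as $(R_{1},R_{2},\ldots)$ with $\lambda$ replaced by $\lambda/\prod_{i=1}^{k-1}R_{i}$, the identification of the tail as $\mathbb{P}^{[1]}_{\alpha}(\lambda/\prod_{i=1}^{k-1}R_{i}(\lambda))$ is immediate from the stick-breaking structure---you do not need to revisit the integral in Proposition~\ref{integral1} or track $q_{1}$ through it.
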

\begin{proof} Proof follows from the structure of $(W_{k},R_{k}(\lambda))$ in Theorem~\ref{TheoremP1}. The relation to $\mathbb{P}^{[0]}(\lambda)$ follows from 
Porposition~\ref{onetozero}.
\end{proof}

We now show how to recover the stick-breaking result in the $\mathrm{PD}(\alpha,\theta)$ case for $\theta>-\alpha,$ in terms of $(R_{k}).$ The next result follows from Corollary~\ref{PDrecovery} using $Y_{1,-\theta}(0)=G_{1+\theta}/S_{\alpha,\theta}\overset{d}=G_{{\frac{\theta+\alpha}{\alpha}}}^{1/\alpha}$
\begin{cor}There are the following results for $\theta>-\alpha.$ 
\begin{enumerate}
\item[(i)]$(P^{[1]}_{\ell}(G_{{\frac{\theta+\alpha}{\alpha}}}^{1/\alpha}))\sim \mathrm{PD}(\alpha,\theta)$
\item[(ii)] The $(R_{k}(G_{{\frac{\theta+\alpha}{\alpha}}}^{1/\alpha}))$ are independent $\mathrm{Beta}(\theta+k\alpha,1)$ variables.
\item[(iii)]Hence,
$$
\beta^{(k-1)}_{1-\alpha,\alpha}\frac{1-R_{k}(G_{{\frac{\theta+\alpha}{\alpha}}}^{1/\alpha}))}{R_{k}(G_{{\frac{\theta+\alpha}{\alpha}}}^{1/\alpha}))}\overset{d}=\frac{\gamma_{1-\alpha}}{\gamma_{\theta+k\alpha}},
$$
a ratio of independent $\mathrm{Gamma}(1-\alpha,1)$ and $\mathrm{Gamma}(\theta+k\alpha,1)$ variables. Hence the variables $W_{k}$ in (\ref{weightsone}) are independent with distribution,
$$
1-W_{k}=\beta_{1-\alpha,\theta+k\alpha}.
$$
\end{enumerate}
\end{cor}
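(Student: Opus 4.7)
The plan is to treat the three claims separately, with (i) supplying the randomization, (ii) identifying the law of the $R_k$'s under that randomization, and (iii) combining them to identify the mixture appearing in (\ref{weightsone}).

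For (i) I would apply Corollary~\ref{PDrecovery} with $\varrho_{2}=1$ and $\varrho_{1}=-\theta$: this gives $\mathbb{E}[\mathbb{P}^{[1]}_{\alpha}(Y_{1,-\theta}(0))]=\mathrm{PD}(\alpha,\theta)$ with $Y_{1,-\theta}(0)\overset{d}{=}G_{1+\theta}/S_{\alpha,\theta}$. It then suffices to check the distributional identity $G_{1+\theta}/S_{\alpha,\theta}\overset{d}{=}G_{(\theta+\alpha)/\alpha}^{1/\alpha}$, which I would verify by a one-line Mellin transform computation: using the classical formula $\mathbb{E}[S_{\alpha}^{-p}]=\Gamma(p/\alpha+1)/\Gamma(p+1)$ and the gamma moments, both sides have $s$-th moment equal to $\Gamma((\theta+s)/\alpha+1)/\Gamma(\theta/\alpha+1)$.

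For (ii) I would plug $\lambda^{\alpha}=G_{(\theta+\alpha)/\alpha}$ into (\ref{Rrep}) and set $X_{0}:=G_{(\theta+\alpha)/\alpha}$, $X_{k}:=X_{k-1}+\mathbf{e}_{k}$, so that $R_{k}^{\alpha}=X_{k-1}/X_{k}$. Since $X_{k-1}\sim \mathrm{Gamma}((\theta+k\alpha)/\alpha)$ is independent of $\mathbf{e}_{k}\sim\mathrm{Gamma}(1)$, the classical beta--gamma algebra yields $X_{k-1}/X_{k}\sim\mathrm{Beta}((\theta+k\alpha)/\alpha,1)$ with $X_{k-1}/X_{k}$ independent of $X_{k}$. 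An induction, passing the independence down the chain, shows the ratios are jointly independent. Applying the power change of variables $R_{k}=(R_{k}^{\alpha})^{1/\alpha}$ converts each marginal into $\mathrm{Beta}(\theta+k\alpha,1)$, giving (ii).

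For (iii), let $V_{k}:=\beta^{(k)}_{1-\alpha,\alpha}(1-R_{k})/R_{k}$, so that $1-W_{k}=V_{k}/(1+V_{k})$. Since $1-W_{k}\sim\mathrm{Beta}(1-\alpha,\theta+k\alpha)$ is equivalent to $V_{k}\overset{d}{=}\gamma_{1-\alpha}/\gamma_{\theta+k\alpha}$, I would verify the latter by Mellin transforms: the independent factors give
\[
\mathbb{E}[V_{k}^{s}]
=\frac{\Gamma(1-\alpha+s)}{\Gamma(1-\alpha)\Gamma(1+s)}\cdot\frac{\Gamma(1+s)\Gamma(\theta+k\alpha-s)}{\Gamma(\theta+k\alpha)}
=\frac{\Gamma(1-\alpha+s)\Gamma(\theta+k\alpha-s)}{\Gamma(1-\alpha)\Gamma(\theta+k\alpha)},
\]
which is exactly the Mellin transform of $\gamma_{1-\alpha}/\gamma_{\theta+k\alpha}$. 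This identifies the marginal of $1-W_{k}$. For joint independence of $(W_{k})$, observe that each $W_{k}$ is a function of the pair $(\beta^{(k)}_{1-\alpha,\alpha},R_{k})$; the $\beta^{(k)}_{1-\alpha,\alpha}$ are iid and independent of the $R_{k}$'s, and by (ii) the $R_{k}$'s are themselves independent, so the pairs, and hence the $W_{k}$'s, are independent. The two short sub-claims ($R_{1}(0)=0$ and $R_{k}(0)=\beta_{(k-1)\alpha,1}$) fall out by inspection of (\ref{Rrep}) at $\lambda=0$.

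The main obstacle is (iii): the beta--gamma simplification of $\beta^{(k)}_{1-\alpha,\alpha}(1-R_{k})/R_{k}$ does not follow from a one-line multiplicative identity (the factors sit in incompatible gamma representations), so it requires the moment computation above. Everything else is a transfer from Corollary~\ref{PDrecovery} or standard beta--gamma algebra.
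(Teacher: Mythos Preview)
Your argument is correct and follows the same route as the paper, which proves (i) exactly as you do---by Corollary~\ref{PDrecovery} together with the identity $Y_{1,-\theta}(0)=G_{1+\theta}/S_{\alpha,\theta}\overset{d}{=}G_{(\theta+\alpha)/\alpha}^{1/\alpha}$---and leaves (ii), (iii) as immediate consequences of beta--gamma algebra that you have spelled out in full.

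Two minor remarks. First, your claimed ``obstacle'' in (iii) is not one: since $R_{k}\sim\mathrm{Beta}(\theta+k\alpha,1)$ gives $(1-R_{k})/R_{k}\overset{d}{=}\gamma_{1}/\gamma_{\theta+k\alpha}$ with independent gammas, and $\beta^{(k)}_{1-\alpha,\alpha}$ is independent of this ratio, the standard identity $\beta_{1-\alpha,\alpha}\,\gamma_{1}\overset{d}{=}\gamma_{1-\alpha}$ (with the product independent of $\gamma_{\theta+k\alpha}$) gives $V_{k}\overset{d}{=}\gamma_{1-\alpha}/\gamma_{\theta+k\alpha}$ in one line, without Mellin transforms. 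Second, your closing parenthetical about $R_{1}(0)=0$ and $R_{k}(0)=\beta_{(k-1)\alpha,1}$ belongs to the $m=0$ corollary, not to this one; it is harmless but out of place here.
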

\begin{rem}One may compare the $R_{k}(G_{{\frac{\theta+\alpha}{\alpha}}}^{1/\alpha})),$ with variables denoted $R_{1},R_{2},\ldots$ in \cite{PY97}. See in particular \cite[Theorem 15]{PY97}.
\end{rem}
\begin{rem} $(P_{\ell,k-1}(
G_{{\frac{\theta+\alpha}{\alpha}}}^{1/\alpha}),\ell\ge 1)\sim\mathrm{PD}(\alpha,\theta+(k-1)\alpha)$ for $k=1,2,\ldots.$
This equates to the $\mathrm{PD}(\alpha,\theta)$ Markov Chain on the space of mass partitions by the action of\textit{ size biased deletion} as described in \cite[Chap 6]{PY97}.
\end{rem}

\section{Stick-breaking for $(P_{\ell}^{[m]}(\lambda))\sim \mathbb{P}^{[m]}_{\alpha}(\lambda)$}\label{genm}
We now discuss the case where $m=1,2,3,\ldots$ is a general positive integer, corresponding to $(P_{\ell})|N_{S_{\alpha}}(\lambda)=m,$ when $(P_{\ell})\sim \mathrm{PD}(\alpha,0).$ 

The joint density of 
$(W_{1},\ldots, W_{n},S_{\alpha,n\alpha})|N_{S_{\alpha}}(\lambda)=m,$ as described in Lemma~\ref{Wdensity}, can be expressed as 
\begin{equation}
\frac{\lambda^{m-\alpha}{\mbox e}^{-\frac{\lambda s}{\prod_{i=1}^{n}w_{i}}}{\mbox e}^{\lambda^{\alpha}}\alpha^{n}}{[\Gamma(1-\alpha)]^{n}\alpha \Omega_{m}(\lambda^{\alpha})}\prod_{k=1}^{n}w^{-m}_{k}w^{k\alpha-1}_{k}{(1-w_{k})}^{-\alpha}\times s^{m}s^{-n\alpha}f_{\alpha}(s).
\label{jointm}
\end{equation}
\subsection{Description of joint density of $(W_{1},\ldots,W_{n})|N_{S_{\alpha}}(\lambda)=m$}
\begin{lem}\label{jointWm}The joint density of $(W_{1},\ldots,W_{n})|N_{S_{\alpha}}(\lambda)=m$ can be expressed 
in terms of the following conditional joint density of variables $(W_{1},\ldots,W_{n},R_{1,m},\ldots,R_{n,m})$
$$
\frac{\Omega_{m}(\lambda^{\alpha}/\prod_{l=1}^{n}r^{\alpha}_{l})}{\Omega_{m}(\lambda^{\alpha})}\left[\prod_{i=1}^{n}\frac{w^{-m}_{i}{(1-w_{i})}^{-\alpha}{(w_{i}-r_{i})}^{\alpha-1}r^{m-\alpha}_{i}}{\Gamma(1-\alpha)\Gamma(\alpha)}\right]
f_{R_{1},\ldots,R_{n}}(\mathbf{r}_{n})
$$
for $0<r_{i}<w_{i}<1, i=1,\ldots,n.$ Specifically a description of the density of $(W_{1},\ldots,W_{n})|N_{S_{\alpha}}(\lambda)=m$
is obtained by integrating over $(r_{1},\ldots,r_{n}).$
\end{lem}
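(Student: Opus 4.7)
The plan is to start from the joint density of $(W_{1},\ldots,W_{n},S_{\alpha,n\alpha})|N_{S_{\alpha}}(\lambda)=m$ in~(\ref{jointm}), augment by introducing auxiliary variables $(r_{1},\ldots,r_{n})$ via the integral identity of Proposition~\ref{integral1}, then integrate out $s$ against $f_{\alpha}(s)$ using the moment representation~(\ref{momentrep}). The resulting joint density on $(\mathbf{w},\mathbf{r})$ should reorganize so that $f_{R_{1},\ldots,R_{n}}(\mathbf{r}_{n})$ from Lemma~\ref{Rdist} factors off cleanly, leaving the claimed $\Omega_{m}$-ratio times the product of $(w,r)$-weights; marginalizing over $(r_{1},\ldots,r_{n})$ then delivers the density of $(W_{1},\ldots,W_{n})|N_{S_{\alpha}}(\lambda)=m$ as stated.

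More concretely, I would use Proposition~\ref{integral1} to rewrite the sub-expression $s^{-n\alpha}e^{-\lambda s/\prod_{l}w_{l}}\prod_{l}w_{l}^{l\alpha-1}$ appearing inside~(\ref{jointm}) as $[\Gamma(\alpha)]^{-n}$ times an $n$-fold integral over $0<r_{l}<w_{l}$ of $\lambda^{n\alpha}e^{-\lambda s/\prod r_{l}}\prod_{l}(w_{l}-r_{l})^{\alpha-1}r_{l}^{-(n-l+1)\alpha-1}$. After substituting and applying Fubini, the only remaining $s$-dependence is $s^{m}e^{-\lambda s/\prod r_{l}}f_{\alpha}(s)$, and~(\ref{momentrep}) evaluates $\int_{0}^{\infty}s^{m}e^{-\mu s}f_{\alpha}(s)\,ds=\alpha e^{-\mu^{\alpha}}\mu^{\alpha-m}\Omega_{m}(\mu^{\alpha})$ with $\mu:=\lambda/\prod r_{l}$. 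Regrouping, the factor $\lambda^{m-\alpha}\mu^{\alpha-m}$ telescopes to $(\prod r_{l})^{m-\alpha}$, which distributes across the $r_{l}$-product as $\prod_{l}r_{l}^{m-\alpha}$; the quotient $\Omega_{m}(\mu^{\alpha})/\Omega_{m}(\lambda^{\alpha})$ gives the stated ratio $\Omega_{m}(\lambda^{\alpha}/\prod r_{l}^{\alpha})/\Omega_{m}(\lambda^{\alpha})$; and the remaining block $\alpha^{n}\lambda^{n\alpha}e^{\lambda^{\alpha}-\mu^{\alpha}}\prod_{l}r_{l}^{-(n-l+1)\alpha-1}$ is recognizable as $f_{R_{1},\ldots,R_{n}}(\mathbf{r}_{n})$ by Lemma~\ref{Rdist}(iii). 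What is left over is exactly $\prod_{k}\{w_{k}^{-m}(1-w_{k})^{-\alpha}(w_{k}-r_{k})^{\alpha-1}r_{k}^{m-\alpha}/[\Gamma(1-\alpha)\Gamma(\alpha)]\}$, matching the form claimed.

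The substantive part of the argument is essentially bookkeeping: tracking all the powers of $\lambda$, $\alpha$, $w_{l}$ and $r_{l}$ through the two integral manipulations, checking that the $\alpha^{-1}$ inside $[\alpha\Omega_{m}(\lambda^{\alpha})]^{-1}$ of~(\ref{jointm}) is consumed by the $\alpha$ coming from~(\ref{momentrep}), that $\lambda^{m-\alpha}\cdot\lambda^{\alpha-m}$ telescopes cleanly, and that the $e^{\lambda^{\alpha}}$ present in~(\ref{jointm}) matches the $e^{\lambda^{\alpha}}$ embedded in $f_{R_{1},\ldots,R_{n}}$. A minor conceptual point is that the support constraint $0<r_{l}<w_{l}<1$ is inherited from Proposition~\ref{integral1} (the inner integrals range over $0<r_{l}<w_{l}$ and $w_{l}<1$ holds a priori), which justifies the stated domain for the conditional joint density. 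Beyond this careful matching of constants, I expect no genuine difficulty, since the structural template---augment by $\mathbf{r}$, integrate out $s$, and read off $f_{R_{1},\ldots,R_{n}}$---is exactly the one already used in Lemma~\ref{jointWzero} for the $m=0$ case, with the $m\ge 1$ case simply carrying the additional $s^{m}\prod w_{k}^{-m}$ factors through the same pipeline.
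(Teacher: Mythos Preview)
Your proposal is correct and follows essentially the same route as the paper's proof: replace the $s^{-n\alpha}e^{-\lambda s/\prod w_{l}}\prod_{l}w_{l}^{l\alpha-1}$ block in~(\ref{jointm}) by the integral of Proposition~\ref{integral1}, integrate out $s$ against $s^{m}f_{\alpha}(s)$ via~(\ref{momentrep}), and regroup so that $f_{R_{1},\ldots,R_{n}}(\mathbf{r}_{n})$ factors off. Your additional bookkeeping of the constants (the cancellation of the $\alpha$'s, the telescoping of $\lambda^{m-\alpha}\mu^{\alpha-m}$ into $\prod r_{l}^{m-\alpha}$, and the matching of the $e^{\lambda^{\alpha}}$ factors) is accurate and simply makes explicit what the paper summarizes as ``an augmentation and arrangement of terms.''
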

\begin{proof} Replace $s^{-n\alpha}{\mbox e}^{-\frac{\lambda s}{(\prod_{l=1}^{n}w_{l})}}\prod_{l=1}^{n}w^{l\alpha-1}_{l}$ in (\ref{jointm}) with the integral in Proposition~\ref{integral1}. Integrate with respect to $s^{m}f_{\alpha}(s)$
leading to the evaluation of 
$$
\mathbb{E}[S^{m}_{\alpha}{\mbox e}^{-\frac{\lambda}{\prod_{i=1}^{n}r_{i}}S_{\alpha}}]
=\alpha{\mbox e}^{-\frac{\lambda^{\alpha}}{(\prod_{l=1}^{n}r^{\alpha}_{l})}}\lambda^{\alpha-m}\prod_{i=1}^{n}r^{m-\alpha}_{i}\times \Omega_{m}(\lambda^{\alpha}/\prod_{l=1}^{n}r^{\alpha}_{l}).
$$
An augmentation and arrangement of terms leads to the indicated joint density.
\end{proof}
Setting $m=1,$ in Lemma~\ref{jointWm} verifies the results in Theorem~\ref{TheoremP1}.
Furthermore if  $X\overset{d}=1/(\beta^{(k)}_{1-\alpha,\alpha}[(1-r)/r]+1)$
\begin{equation}
\mathbb{E}[X^{-(m-1)}]=r^{-(m-1)}\mathrm{E}[{(\beta_{1-\alpha,\alpha}(1-r)+r)}^{m-1}]
\label{normalising}
\end{equation}
which is the same as 
$$
\int_{r}^{1}\frac{w^{-m}{(1-w)}^{-\alpha}{(w-r)}^{\alpha-1}r^{1-\alpha}}{\Gamma(1-\alpha)\Gamma(\alpha)}dw.
$$
This identifies the normalizing constants in Lemma~\ref{jointWm} leading to further descriptions of the 
$(W_{1},\ldots,W_{n})$  and $(R_{1,m},\ldots,R_{n,m}).$
\subsection{Description of the random vector $(R_{1,m}(\lambda),\ldots,R_{n,m}(\lambda))$}

Let $(R_{1,m}(\lambda),\ldots,R_{n,m}(\lambda)):=(R_{1,m},\ldots,R_{n,m})$ denote the joint vector of random variables appearing in Lemma~\ref{jointWm}. It then follows from~(\ref{normalising}), that the joint density can be expressed as
\begin{equation}
f_{R_{1},\ldots,R_{n}}(\mathbf{r}_{n})\frac{\Omega_{m}(\lambda^{\alpha}/\prod_{l=1}^{n}r^{\alpha}_{l})}{\Omega_{m}(\lambda^{\alpha})}
\prod_{i=1}^{n}\mathrm{E}[{(\beta_{1-\alpha,\alpha}(1-r_{i})+r_{i})}^{m-1}]
\label{jointr}
\end{equation}
where, $f_{R_{1},\ldots,R_{n}}(\mathbf{r}_{n})$ is as in Lemma~\ref{Rdist}
$$
f_{R_{1},\ldots,R_{n}}(\mathbf{r}_{n})=\alpha^{n}\lambda^{n\alpha}{\mbox e}^{\lambda^{\alpha}}{\mbox e}^{-\lambda^{\alpha}/(\prod_{l=1}^{n}r_{l})^{\alpha}}\prod_{l=1}^{n}r^{-(n-l+1)\alpha-1}_{l}.
$$
Furthermore, from~(\ref{jointr}), it follows that $R_{1,m}(\lambda)$ has marginal density
  $$
f_{m}(r_{1}|\lambda)=\frac{\Omega_{m}(\lambda^{\alpha}/r^{\alpha}_{1})}{\Omega_{m}(\lambda^{\alpha})}
\mathrm{E}[{(\beta_{1-\alpha,\alpha}(1-r_{1})+r_{1})}^{m-1}]
f_{R_{1}}(r_{1}|\lambda)
$$
and for each $\ell \ge 1,$ $R_{\ell,m}|R_{1,m}=r_{1},\ldots,R_{\ell-1,m}=r_{\ell-1},\lambda$ has density
$f_{m}(r_{\ell}|\lambda/\prod_{i=1}^{\ell-1}r_{i}).$

\begin{prop}\label{PropPGWm1}
Let $R_{1,m}(\lambda),$ denote a random variable taking values in $[0,1],$ with density,
$$
f_{m}(r|\lambda)=\frac{\Omega_{m}(\lambda^{\alpha}/r^{\alpha})}{\Omega_{m}(\lambda^{\alpha})}
\mathrm{E}[{(\beta_{1-\alpha,\alpha}(1-r)+r)}^{m-1}]
f_{R_{1}}(r|\lambda).
$$
\begin{enumerate}
\item[(i)]Then by augmentation there exists a discrete random variable $N^{(1)}_{m}(\lambda),$ such that given $R_{1,m}(\lambda)=r$
its conditional probability mass function $\mathbb{P}(N^{(1)}_{m}(\lambda)=k|R_{1,m}(\lambda)=r):=q_{m}(k|r),$ can be expressed as 
\begin{equation}
q_{m}(k|r)=\frac{\frac{\Gamma(k-\alpha)}{\Gamma(k)\Gamma(1-\alpha)}
{m-1\choose k-1}{(1-r)}^{k-1}r^{m-k}}{\mathrm{E}[{(\beta_{1-\alpha,\alpha}(1-r)+r)}^{m-1}]}
\label{mpmf}
\end{equation}
for $k=1,\ldots,m,$ not depending on $\lambda.$
\item[(ii)]For each $\lambda>0,$ the marginal distribution of $N^{(1)}_{m}(\lambda)$ can be expressed as
$$
q_{k,m}(\lambda):=\mathbb{P}(N^{(1)}_{m}(\lambda)=k)
=\frac{\frac{\Gamma(k-\alpha)}{\Gamma(1-\alpha)}
{m-1\choose k-1}\Omega_{m-k}(\lambda^{\alpha})\alpha\lambda^{\alpha}}{\Omega_{m}(\lambda^{\alpha})}
$$
for $k=1,\ldots,m$
\item[(iii)] $R_{1,m}(\lambda)|N^{(1)}_{m}(\lambda)=k$ has the density of $\lambda/(\lambda+Y_{m,m-k}(\lambda)),$ given by 
\begin{equation}
f_{m,k}(r|\lambda):=\frac{\Omega_{m}(\lambda^{\alpha}/r^{\alpha})}{\Gamma(k)\Omega_{m-k}(\lambda^{\alpha})\alpha\lambda^{\alpha}}{(1-r)}^{k-1}{r}^{m-k}f_{R_{1}}(r|\lambda).
\label{rkmdensity}
\end{equation}
\item[(iv)]That is, 
$
\lambda/R_{1,m}(\lambda)\overset{d}=\lambda+Y_{m,m-N^{(1)}_{m}(\lambda)}(\lambda).
$
\end{enumerate}
\end{prop}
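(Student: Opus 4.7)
The plan is to rewrite $f_m(r|\lambda)$ as a sum of $m$ non-negative summands indexed by $k=1,\ldots,m$, and then identify each summand as the product $q_{k,m}(\lambda)f_{m,k}(r|\lambda)$ of a probability mass function at $k$ and a conditional density in $r$. The first step is to apply the binomial theorem to $(\beta_{1-\alpha,\alpha}(1-r)+r)^{m-1}$ and use the beta moment identity $\mathrm{E}[\beta^{k-1}_{1-\alpha,\alpha}]=\Gamma(k-\alpha)/[\Gamma(k)\Gamma(1-\alpha)]$ to obtain
$$
\mathrm{E}[(\beta_{1-\alpha,\alpha}(1-r)+r)^{m-1}]=\sum_{k=1}^{m}\binom{m-1}{k-1}\frac{\Gamma(k-\alpha)}{\Gamma(k)\Gamma(1-\alpha)}(1-r)^{k-1}r^{m-k}.
$$
Substituting this into the stated formula for $f_m(r|\lambda)$ presents it as a sum of $m$ positive terms. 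The augmented $N^{(1)}_m(\lambda)$ is then taken to be the discrete index $k$ whose joint density with $R_{1,m}(\lambda)$ is proportional to the $k$-th summand; dividing that summand by the full $f_m(r|\lambda)$ reads off the conditional pmf $q_m(k|r)$ in (i), and inspection confirms it does not depend on $\lambda$.

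For (iii) and (ii), the remaining task is to verify that each $k$-th summand factors as $q_{k,m}(\lambda)\,f_{m,k}(r|\lambda)$ with $f_{m,k}(r|\lambda)$ a \emph{bona fide} probability density. I would do this by invoking Lemma~\ref{Ymmk}: applying the change of variable $y=(1-r)/r$ (equivalently $r=1/(1+y)$) to the density of $Y_{m,m-k}(\lambda)/\lambda$ in (\ref{Ydensity}), with Jacobian $|dy/dr|=1/r^{2}$ and the substitutions $(1+y)^{\alpha}=r^{-\alpha}$, $y^{k-1}=(1-r)^{k-1}r^{-(k-1)}$. Combined with $f_{R_1}(r|\lambda)=\alpha\lambda^{\alpha}\mbox{e}^{\lambda^\alpha}\mbox{e}^{-\lambda^\alpha/r^\alpha}r^{-\alpha-1}$, a routine simplification shows that the pushforward density is exactly (\ref{rkmdensity}). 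This proves (iii) and simultaneously certifies that $f_{m,k}(r|\lambda)$ integrates to one; the remaining factor extracted from the $k$-th summand must therefore equal $q_{k,m}(\lambda)$, and a direct match with the extracted coefficient yields the formula in (ii).

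Finally (iv) is immediate from (iii): conditionally on $N^{(1)}_m(\lambda)=k$, one has $R_{1,m}(\lambda)\overset{d}=\lambda/(\lambda+Y_{m,m-k}(\lambda))$, hence $\lambda/R_{1,m}(\lambda)\overset{d}=\lambda+Y_{m,m-k}(\lambda)$, and unconditioning via the marginal $q_{k,m}(\lambda)$ produces the stated distributional identity. The main technical hurdle is bookkeeping in the change-of-variable step, in particular verifying that the boundary conventions $\Omega_0(\lambda^\alpha)=\lambda^{-\alpha}/\alpha$ and $\Omega_1(\lambda^\alpha)=1$ produce the correct constants in the degenerate cases $k=m$ and $k=m-1$; once the factorization is arranged, the remainder is elementary algebra.
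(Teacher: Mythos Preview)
Your proposal is correct and follows essentially the same route as the paper: the paper's proof merely cites Lemma~\ref{jointWm}, Lemma~\ref{Ymmk} and the density~(\ref{Ydensity}) together with ``standard augmentation arguments,'' and what you have written is precisely a spelled-out version of those standard arguments---the binomial/beta-moment expansion to exhibit $f_m(r|\lambda)$ as an $m$-term sum, the change of variable $r=1/(1+y)$ to identify each conditional slice with the pushforward of~(\ref{Ydensity}), and the resulting factorization into $q_{k,m}(\lambda)f_{m,k}(r|\lambda)$. No genuine difference in strategy.
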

\begin{proof} The description is a consequence of Lemma~\ref{jointWm},Lemma~\ref{Ymmk} and the density of $Y_{m,m-k}(\lambda)/\lambda$ in (\ref{Ydensity}). These can be used to verify calculations of integrals and normalizing constants above. The results then follow from standard augmentation arguments.
\end{proof}
Let $(N^{(j)}_{m}(x_{j}),x_{j}>0, j=1,2,\ldots),$ denote for fixed $(x_{j})$ a collection of independent random variables having the same marginal distribution as $N^{(j)}_{m}(x_{j})\overset{d}=N^{(1)}_{m}(x_{j})$ as described in Proposition~\ref{PropPGWm1}. In addition let $(Y^{(j)}_{m,\ell}(x_{j}),\ell=0,\ldots,m-1,j=1,2,\ldots)$ denote a collection of independent random variables such that 

$$
Y^{(j)}_{m,\ell}(x_{j})\overset{d}=Y_{m,\ell}(x_{j})\overset{d}=\frac{x_{j} G_{m-\ell}}{\tau_{\alpha}\left(x^{\alpha}_{j}+G_{\frac{\ell}{\alpha}-K_{\ell}(x_{j})}\right)}
$$

\begin{cor}Let $(R_{1,m}(\lambda),\ldots,R_{n,m}(\lambda))$ denote the joint vector with distribution described by (\ref{jointr}) or equivalently by $\prod_{\ell=1}^{n}f_{m}(r_{\ell}|\lambda/\prod_{i=1}^{\ell-1}r_{i}).$ Set $\lambda_{0}=\lambda,$ and $\lambda_{j-1}=\lambda/\prod_{i=1}^{j-1}R_{i,m}(\lambda)$ for $j=2,3,\ldots,$ then one may set 
$$
\lambda_{j-1}/R_{j}(\lambda)=\lambda_{j-1}+Y^{(j)}_{m,m-N^{(j)}_{m}(\lambda_{j-1})}(\lambda_{j-1})
$$
for $j=1,\ldots,n.$
\end{cor}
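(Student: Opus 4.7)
The plan is to iterate the one-step identity in Proposition~\ref{PropPGWm1}(iv) along the sequence $R_{1,m}(\lambda),\ldots,R_{n,m}(\lambda)$. First I would read off from~(\ref{jointr}) the chain-rule factorization $\prod_{\ell=1}^{n} f_m(r_\ell \mid \lambda_{\ell-1})$, which exhibits the joint vector as a (time-inhomogeneous) Markov chain: conditionally on $(R_{1,m},\ldots,R_{\ell-1,m}) = (r_1,\ldots,r_{\ell-1})$, the next coordinate $R_{\ell,m}(\lambda)$ has density $f_m(\cdot\mid \lambda_{\ell-1})$ with $\lambda_{\ell-1} = \lambda/\prod_{i=1}^{\ell-1} r_i$. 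This conditional density is exactly the \emph{marginal} density of the variable $R_{1,m}(\lambda_{\ell-1})$ in Proposition~\ref{PropPGWm1} with parameter $\lambda_{\ell-1}$ in place of $\lambda$.

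Next I would invoke Proposition~\ref{PropPGWm1}(iv) at each step $j=1,\ldots,n$ with the fresh parameter $\lambda_{j-1}$, introducing for that step the independent augmenting pair $(N^{(j)}_m(\lambda_{j-1}), Y^{(j)}_{m,\cdot}(\lambda_{j-1}))$ specified above the statement of the corollary. This gives, conditionally on the past,
\begin{equation*}
\lambda_{j-1}/R_{j,m}(\lambda) \;\overset{d}{=}\; \lambda_{j-1} + Y^{(j)}_{m,\,m-N^{(j)}_m(\lambda_{j-1})}(\lambda_{j-1}),
\end{equation*}
which is the claimed representation. Assembling these one-step identities across $j$ uses the standard fact that a Markov chain admits a joint representation via independent innovations at each step; here the innovations at step $j$ are the pair $(N^{(j)}_m, Y^{(j)}_{m,\cdot})$ built from exponentials and an independent generalized gamma primitive, and are by construction independent of $(R_{1,m},\ldots,R_{j-1,m})$.

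The main obstacle is purely a bookkeeping one: verifying that conditioning on the $\sigma$-field generated by $(R_{1,m}(\lambda),\ldots,R_{j-1,m}(\lambda))$ is equivalent to conditioning on the derived parameter $\lambda_{j-1}$, and that the freshly drawn augmenting pair at step $j$ is independent of that $\sigma$-field. The first point follows from the Markov factorization of~(\ref{jointr}); the second is immediate from the independent-copies construction of $(N^{(j)}_m,Y^{(j)}_{m,\cdot})$ stated in the paragraph preceding the corollary. Once these independence statements are recorded, the corollary follows directly by iterating Proposition~\ref{PropPGWm1}(iv), so no additional calculation beyond Lemma~\ref{jointWm} and Proposition~\ref{PropPGWm1} is needed.
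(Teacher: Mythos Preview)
Your proposal is correct and follows the same line of reasoning the paper intends: the corollary is stated without a separate proof because it is meant to follow immediately from the Markov factorization noted just before Proposition~\ref{PropPGWm1} (namely that $R_{\ell,m}\mid R_{1,m}=r_1,\ldots,R_{\ell-1,m}=r_{\ell-1}$ has density $f_m(\cdot\mid \lambda/\prod_{i=1}^{\ell-1}r_i)$) together with part~(iv) of that proposition applied at each updated parameter $\lambda_{j-1}$. Your explicit spelling out of the independent-innovation construction and the $\sigma$-field bookkeeping simply makes precise what the paper leaves implicit.
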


\subsection{Description of $(W_{1},\ldots,W_{n})|N_{S_{\alpha}}(\lambda)=m$}

Hereafter, for $j=1,2,\ldots,$ set 
$$
N^{(j)}_{m}(\lambda_{j-1})=N_{j,m}(\lambda).
$$

\begin{thm}\label{TheoremPGWm} If $(P^{[m]}_{\ell}(\lambda))\sim\mathbb{P}^{[m]}_{\alpha}(\lambda),$ then the sequence $(\tilde{P}_{k}(\lambda)),$ obtained by size-biased sampling from $(P^{[m]}_{\ell}(\lambda)),$
can be represented as $\tilde{P}_{k}(\lambda)=(1-{W}_{k})\prod_{l=1}^{k-1}W_{l},$ 
with the following properties;
\begin{enumerate}
\item[(i)] Given $(R_{1,m}(\lambda)=r_{1},R_{2,m}(\lambda)=r_{2},\ldots),$ the $(W_{\ell})$ are conditionally independent,
with respective densities $B_{m,\alpha}(w_{\ell}|r_{\ell}),$ for $\ell=1,\ldots$ expressible as 
$$
B_{m,\alpha}(w_{\ell}|r_{\ell})=\frac{1}{\Gamma(1-\alpha)\Gamma(\alpha)}\frac{w^{-m}_{\ell}{(1-w_{\ell})}^{-\alpha}{(w_{\ell}-r_{\ell})}^{\alpha-1}r^{m-\alpha}_{\ell}}{\mathbb{E}[{(\beta_{1-\alpha,\alpha}(1-r_{\ell})+r_{\ell})}^{m-1}]}
$$
for $r_{\ell}<w_{\ell}<1,$ not depending on $\lambda.$
\item[(ii)] The density $B_{m,\alpha}(w|r)$ can be expressed in terms of an m-component mixture and it follows that, given $(R_{1,m}(\lambda)=r,N_{1,m}=k)$ for $k=1,\ldots,m.$  
\begin{equation}
W_{1}\overset{d}={[\beta^{(1)}_{({k-\alpha},\alpha)}{(1-r)}/{r}+1]}^{-1}
  \label{weights3}
\end{equation}
with density, for $r<w<1,$
$$
\frac{\Gamma(k)}{\Gamma(k-\alpha)\Gamma(\alpha)}{w^{-k}{(1-w)}^{k-\alpha-1}{(w-r)}^{\alpha-1}r^{k-\alpha}{(1-r)}^{1-k}}.
$$
\item[(iii)] Hence, the $(W_{\ell})$ are generally dependent random variables represented as
\begin{equation}
W_{\ell}:={[\beta^{(\ell)}_{N_{\ell,m}(\lambda)-\alpha,\alpha}[(1-R_{\ell,m}(\lambda))/R_{\ell,m}(\lambda)]+1]}^{-1}
 \label{weights2}
\end{equation}
\end{enumerate}  
\end{thm}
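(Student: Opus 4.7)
My plan is to deduce the theorem directly from Lemma~\ref{jointWm} combined with the description of $(R_{1,m}(\lambda),\ldots,R_{n,m}(\lambda))$ already obtained in Proposition~\ref{PropPGWm1}, much as Theorem~\ref{TheoremP1} is claimed as an immediate specialization. For part (i), I would start with the joint conditional density of $(W_1,\ldots,W_n,R_{1,m},\ldots,R_{n,m})$ given $N_{S_\alpha}(\lambda)=m$ from Lemma~\ref{jointWm} and divide by the marginal joint density of $(R_{1,m},\ldots,R_{n,m})$ given by~(\ref{jointr}). The $\Omega_m$-ratio and the $f_{R_1,\ldots,R_n}$ factors cancel, leaving a product over $\ell=1,\ldots,n$ of
\[
\frac{w_\ell^{-m}(1-w_\ell)^{-\alpha}(w_\ell-r_\ell)^{\alpha-1}r_\ell^{m-\alpha}}{\Gamma(1-\alpha)\Gamma(\alpha)\,\mathbb{E}[(\beta_{1-\alpha,\alpha}(1-r_\ell)+r_\ell)^{m-1}]},
\]
which is precisely $B_{m,\alpha}(w_\ell\mid r_\ell)$; the identity in~(\ref{normalising}) confirms that these factors indeed integrate to $1$, so both the conditional independence and the stated form of $B_{m,\alpha}$ follow at once.

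For part (ii), I would expand the normalizing constant using the binomial theorem,
\[
\mathbb{E}[(\beta_{1-\alpha,\alpha}(1-r)+r)^{m-1}]=\sum_{k=1}^{m}\binom{m-1}{k-1}(1-r)^{k-1}r^{m-k}\,\mathbb{E}\bigl[\beta_{1-\alpha,\alpha}^{\,k-1}\bigr],
\]
and recognize $\mathbb{E}[\beta_{1-\alpha,\alpha}^{k-1}]=\Gamma(k-\alpha)/[\Gamma(1-\alpha)\Gamma(k)]$, so that the weights $q_m(k\mid r)$ from~(\ref{mpmf}) appear naturally as a probability mass function on $\{1,\ldots,m\}$. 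Multiplying each $q_m(k\mid r)$ by the candidate component density
\[
\frac{\Gamma(k)}{\Gamma(k-\alpha)\Gamma(\alpha)}\,w^{-k}(1-w)^{k-\alpha-1}(w-r)^{\alpha-1}r^{k-\alpha}(1-r)^{1-k}
\]
and summing, the core algebraic identity to verify is
\[
\sum_{k=1}^{m}\binom{m-1}{k-1}w^{-k}(1-w)^{k-\alpha-1}=w^{-m}(1-w)^{-\alpha},
\]
which follows from pulling out $w^{-1}(1-w)^{-\alpha}$ and applying $(1+(1-w)/w)^{m-1}=w^{-(m-1)}$. This reconstructs $B_{m,\alpha}(w\mid r)$ exactly, proving the mixture representation. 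A short change-of-variables check confirms that the $k$-th component is the density of $[\beta^{(1)}_{k-\alpha,\alpha}(1-r)/r+1]^{-1}$, giving~(\ref{weights3}).

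Part (iii) is then an assembly step: conditional on $(R_{\ell,m}(\lambda)=r_\ell)_\ell$ the $W_\ell$ are independent with densities $B_{m,\alpha}(\cdot\mid r_\ell)$, and each such density is, by part (ii), the mixture of the distributions $[\beta^{(\ell)}_{k-\alpha,\alpha}(1-r_\ell)/r_\ell+1]^{-1}$ with mixing weights $q_m(k\mid r_\ell)$. Invoking the definition $N_{\ell,m}(\lambda)=N^{(\ell)}_m(\lambda_{\ell-1})$, where by Proposition~\ref{PropPGWm1} $N^{(\ell)}_m(\lambda_{\ell-1})$ has conditional mass function $q_m(\cdot\mid R_{\ell,m}(\lambda))$, yields~(\ref{weights2}).

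The main obstacle is not any single estimate but the bookkeeping in part (ii): one must recognize both the $q_m(k\mid r)$ and the stated component density inside the unwieldy $B_{m,\alpha}(w\mid r)$, and the payoff depends on the binomial identity above clicking precisely so that the spurious factor $w^{-m}(1-w)^{-\alpha}$ re-emerges; everything else is routine factorization and change of variables.
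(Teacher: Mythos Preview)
Your proposal is correct and follows exactly the approach the paper indicates: the paper's own proof simply says ``These results follow from Lemma~\ref{jointWm} and the descriptions in Proposition~\ref{PropPGWm1},'' and you have spelled out precisely how---dividing the joint density in Lemma~\ref{jointWm} by the marginal in~(\ref{jointr}) for part~(i), then verifying the $m$-component mixture via the binomial identity and change of variables for part~(ii), and assembling for part~(iii). There is nothing to add; your bookkeeping in part~(ii) is the only nontrivial step and it is handled correctly.
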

\begin{proof}These results follow from Lemma~\ref{jointWm} and the descriptions in Proposition~\ref{PropPGWm1}.
\end{proof}
\begin{cor}\label{number}Consider $(N_{\ell,m}(\lambda), \ell\ge 1),$ as described in Theorem~\ref{TheoremPGWm} and (\ref{weights2}). Then  $(N_{\ell,m}(\lambda), \ell\ge 1)|((W_{\ell},R_{\ell,m}(\lambda)),\ell\ge 1)$ are conditionally independent such that each $N_{\ell,m}(\lambda)-1$ has a $\mathrm{Binomial}(m-1, (1-W_{\ell}))$ distribution.
\end{cor}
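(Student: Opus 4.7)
The plan is to augment the joint conditional density of $(W_1,\ldots,W_n,R_{1,m}(\lambda),\ldots,R_{n,m}(\lambda))$ with the auxiliary variables $(N_{\ell,m}(\lambda))_{\ell=1}^{n}$, exploit the product structure across coordinates $\ell$ that is already visible in Lemma~\ref{jointWm}, and then apply Bayes' rule locally at each coordinate.

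First, from Lemma~\ref{jointWm} and Theorem~\ref{TheoremPGWm}(i), the joint density of $(W_1,\ldots,W_n)$ given $(R_{1,m}(\lambda),\ldots,R_{n,m}(\lambda))=(r_1,\ldots,r_n)$ factors as $\prod_{\ell=1}^{n} B_{m,\alpha}(w_\ell\mid r_\ell)$. The key observation, recorded in Theorem~\ref{TheoremPGWm}(ii), is that $B_{m,\alpha}(\cdot\mid r)$ is an $m$-component mixture whose mixing weights are precisely the probabilities $q_m(k\mid r)=\mathbb{P}(N_{\ell,m}(\lambda)=k\mid R_{\ell,m}(\lambda)=r)$ from Proposition~\ref{PropPGWm1}. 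Therefore, after augmenting each $W_\ell$ by its mixture index $N_{\ell,m}(\lambda)$, the joint augmented density of $((W_\ell,R_{\ell,m}(\lambda),N_{\ell,m}(\lambda)))_{\ell=1}^{n}$ still factors as a product over $\ell$. This factorization alone yields the claimed conditional independence of the $N_{\ell,m}(\lambda)$'s given $((W_\ell,R_{\ell,m}(\lambda)))_{\ell=1}^{n}$.

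It remains to identify the marginal conditional law at a single coordinate. By Bayes' rule,
\[
\mathbb{P}(N_{\ell,m}(\lambda)=k\mid W_\ell=w,R_{\ell,m}(\lambda)=r)=\frac{q_m(k\mid r)\,B^{k}_{m,\alpha}(w\mid r)}{B_{m,\alpha}(w\mid r)},
\]
where $B^{k}_{m,\alpha}(w\mid r)$ denotes the conditional density in Theorem~\ref{TheoremPGWm}(ii). Plugging in the explicit formulas~(\ref{mpmf}) for $q_m(k\mid r)$, the mixture component $B^{k}_{m,\alpha}$, and the total density $B_{m,\alpha}$ from Theorem~\ref{TheoremPGWm}(i), the common normalizing constant $\mathbb{E}[(\beta_{1-\alpha,\alpha}(1-r)+r)^{m-1}]$ cancels (this cancellation is already the content of~(\ref{normalising})), as do the factors $r^{m-\alpha}$, $(w-r)^{\alpha-1}$, and the gamma constants. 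What remains collapses to
\[
\binom{m-1}{k-1}w^{m-k}(1-w)^{k-1},\qquad k=1,\ldots,m,
\]
which is the pmf of a $\mathrm{Binomial}(m-1,1-w)$ variable shifted by $1$, and which in particular does \emph{not} depend on $r$.

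No step poses a serious obstacle; the only point demanding care is bookkeeping in the Bayes computation, i.e., verifying that the $r$-dependent normalizer and the $(w-r)^{\alpha-1}$ factor in both numerator and denominator cancel cleanly so that the resulting pmf depends on $W_\ell$ alone. Combined with the product factorization established in the first step, this gives conditional independence of $(N_{\ell,m}(\lambda))_{\ell\ge 1}$ given $((W_\ell,R_{\ell,m}(\lambda)))_{\ell\ge 1}$, with the stated Binomial marginals.
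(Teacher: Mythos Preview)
Your proof is correct and follows exactly the route implicit in the paper: the corollary is stated there without proof, as an immediate consequence of the product structure in Lemma~\ref{jointWm} and the explicit mixture decomposition in Proposition~\ref{PropPGWm1} and Theorem~\ref{TheoremPGWm}. Your Bayes-rule computation cleanly verifies the cancellations (of $\mathbb{E}[(\beta_{1-\alpha,\alpha}(1-r)+r)^{m-1}]$, $(w-r)^{\alpha-1}$, $r^{m-\alpha}$, and the gamma factors) that leave only $\binom{m-1}{k-1}w^{m-k}(1-w)^{k-1}$, which is precisely what the paper leaves to the reader.
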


\subsection{Recovering $\mathrm{GEM}(\alpha,\theta)$}
The $\mathrm{GEM}(\alpha,\theta)$ distribution is recovered by mixing with respect to the distribution of 
$Y_{m,-\theta}(0)\overset{d}=G_{m+\theta}/S_{\alpha,\theta}\overset{d}=G^{1/\alpha}_{\theta/\alpha+K_{m}},$ which has density, for $y>0,$
\begin{equation}
\frac{\alpha\Omega_{m}(y^{\alpha})}{\Gamma(m+\theta)\mathbb{E}[S^{-\theta}_{\alpha}]}y^{\alpha+\theta-1}{\mbox e}^{-y^{\alpha}}.
\label{GEMm}
\end{equation}
It follows from Corollary~\ref{number} that $(N_{\ell,m}(G_{m+\theta}/S_{\alpha,\theta}), \ell\ge 1),$  are independent with respective probability mass functions for each $\ell\ge 1,$
$$
{m-1\choose k-1}\mathbb{E}[{(1-W_{\ell})}^{k-1}W_{\ell}^{m-k}]
$$
where $W_{\ell}~\sim \mathrm{Beta}(\theta+\ell\alpha,1-\alpha),$ see also~\cite{Pit95}. $(W_{\ell},\ell\ge 1)|(N_{\ell,m}(G_{m+\theta}/S_{\alpha,\theta}), \ell\ge 1),$ are independent such that $W_{\ell}|N_{\ell,m}(G_{m+\theta}/S_{\alpha,\theta})=k,$ has a $\mathrm{Beta}(\theta+\ell\alpha+m-k,k-\alpha)$ distribution. $(R_{\ell,m}(G_{m+\theta}/S_{\alpha,\theta}), \ell\ge 1)|(N_{\ell,m}(G_{m+\theta}/S_{\alpha,\theta}), \ell\ge 1)$ are independent such that  $R_{\ell,m}(G_{m+\theta}/S_{\alpha,\theta})|N_{\ell,m}(G_{m+\theta}/S_{\alpha,\theta})=k,$ has a $\mathrm{Beta}(\theta+\ell\alpha+m-k,k)$ distribution. 
\subsubsection{Distribution of $(R_{\ell,m},\ell\ge 1)$ in the $\mathrm{GEM}(\alpha,\theta)$ case}
It is interesting to highlight the distribution of
$(R_{\ell,m}(G_{m+\theta}/S_{\alpha,\theta}),\ell \ge 1),$ which we don't believe is well known even in this ideal setting.
\begin{cor}Let $(R_{1,m}(\lambda),\ldots,R_{n,m}(\lambda)),$ for each $n,$ be the variables with joint density in~
(\ref{jointr}). Then in the $\mathrm{GEM}(\alpha,\theta)$ where the $(W_{\ell})$ are independent $\mathrm{Beta}(\theta+\ell\alpha,1-\alpha),$ One has that the collection of variables $(R_{\ell,m}(G_{m+\theta}/S_{\alpha,\theta}),\ell \ge 1),$
are independent with respective distributions on $[0,1],$
$$
\frac{\alpha\Gamma(m+\theta+\ell \alpha)\mathbb{E}[S^{-(\theta+\ell\alpha)}_{\alpha}]}{\Gamma(m+\theta+(\ell-1)\alpha)\mathbb{E}[S^{-(\theta+(\ell-1)\alpha)}_{\alpha}]}r^{\theta+\ell\alpha-1}_{\ell}\mathbb{E}[{(\beta_{1-\alpha,\alpha}(1-r_{\ell})+r_{\ell})}^{m-1}]
$$
where $\mathbb{E}[S^{-\theta}_{\alpha}]=\Gamma(\theta/\alpha+1)/\Gamma(\theta+1).$
\end{cor}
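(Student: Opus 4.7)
The plan is to bypass direct integration and instead leverage the chain of conditional laws already recorded in the paragraph preceding the statement. In the $\mathrm{GEM}(\alpha,\theta)$ mixing case we already have: the $(W_{\ell})$ are independent $\mathrm{Beta}(\theta+\ell\alpha,1-\alpha)$; the $(N_{\ell,m}(G_{m+\theta}/S_{\alpha,\theta}))$ are independent; and conditional on them the $(R_{\ell,m}(G_{m+\theta}/S_{\alpha,\theta}))$ are independent with $R_{\ell,m}|N_{\ell,m}=k\sim\mathrm{Beta}(\theta+\ell\alpha+m-k,k)$. Since the conditional law of each $R_{\ell,m}$ depends only on the corresponding $N_{\ell,m}$ and the $(N_{\ell,m})$ are independent, unconditional independence of $(R_{\ell,m}(G_{m+\theta}/S_{\alpha,\theta}))$ follows by a routine factorization of the joint density. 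It therefore suffices to identify the marginal density of each $R_{\ell,m}$.

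For fixed $\ell$ write $a:=\theta+\ell\alpha$. I would then compute
\begin{equation*}
f_{R_{\ell,m}}(r)=\sum_{k=1}^{m}\mathbb{P}(N_{\ell,m}=k)\,f_{\mathrm{Beta}(a+m-k,\,k)}(r),
\end{equation*}
where $\mathbb{P}(N_{\ell,m}=k)=\binom{m-1}{k-1}B(a+m-k,k-\alpha)/B(a,1-\alpha)$, obtained by evaluating $\mathbb{E}[(1-W_{\ell})^{k-1}W_{\ell}^{m-k}]$ as a Beta integral. Multiplying through and collecting Gamma factors, $r^{a-1}$ pulls out and the $k$-dependent remnant is $\binom{m-1}{k-1}\Gamma(k-\alpha)/[\Gamma(1-\alpha)\Gamma(k)]\,(1-r)^{k-1}r^{m-k}$, leaving a prefactor $\Gamma(a+m)\Gamma(a+1-\alpha)/[\Gamma(a+m-\alpha)\Gamma(a)]$.

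Next I would recognize the inner sum as the binomial expansion of $\mathbb{E}[(\beta_{1-\alpha,\alpha}(1-r)+r)^{m-1}]$, using $\mathbb{E}[\beta_{1-\alpha,\alpha}^{\,j}]=\Gamma(1-\alpha+j)/[\Gamma(1-\alpha)\,j!]$ and reindexing $j=k-1$. Finally, invoking the identity $\mathbb{E}[S_{\alpha}^{-\theta}]=\Gamma(\theta/\alpha+1)/\Gamma(\theta+1)$ given in the statement, a short Gamma-function manipulation yields
\begin{equation*}
\frac{\Gamma(a+m)\Gamma(a+1-\alpha)}{\Gamma(a+m-\alpha)\Gamma(a)}=\frac{\alpha\,\Gamma(m+a)\,\mathbb{E}[S_{\alpha}^{-a}]}{\Gamma(m+a-\alpha)\,\mathbb{E}[S_{\alpha}^{-(a-\alpha)}]},
\end{equation*}
matching the prefactor in the corollary and completing the identification.

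The main obstacle is purely Gamma-function bookkeeping; no further probabilistic input is needed beyond Corollary~\ref{number} and the beta-binomial mixture structure already in place. An alternative would be to multiply the joint density~(\ref{jointr}) by the mixing density~(\ref{GEMm}) and integrate over $\lambda$ with the substitution $u=\lambda^{\alpha}/\prod_{l=1}^{n}r_{l}^{\alpha}$, after which $\Omega_{m}(\lambda^{\alpha})$ cancels and the joint density factorizes directly; but that route merely hides the same combinatorial identity inside the surviving $\Omega_{m}$ factor and would need to be unpacked in the same way.
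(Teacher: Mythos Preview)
Your proof is correct and takes a genuinely different route from the paper's. The paper's proof is a one-line sketch: multiply the joint density~(\ref{jointr}) by the mixing density~(\ref{GEMm}) and integrate over $\lambda$, after which the product factorizes by a telescoping cancellation that the authors leave implicit. This is exactly the alternative you mention (and set aside) in your last paragraph.

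By contrast, you work entirely at the level of the hierarchical structure already recorded in the paragraph preceding the corollary: the independence of the $(N_{\ell,m})$ together with the conditional independence of the $(R_{\ell,m})$ given the $(N_{\ell,m})$ yields unconditional independence of the $(R_{\ell,m})$ immediately, and then each marginal is a finite Beta mixture that you collapse via the binomial identity for $\mathbb{E}[(\beta_{1-\alpha,\alpha}(1-r)+r)^{m-1}]$. Your Gamma bookkeeping is right, including the final identification of the prefactor using $\mathbb{E}[S_{\alpha}^{-a}]/\mathbb{E}[S_{\alpha}^{-(a-\alpha)}]=\Gamma(a+1-\alpha)/[\alpha\Gamma(a)]$. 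The advantage of your route is that it reuses Corollary~\ref{number} and the conditional Beta laws rather than redoing a $\lambda$-integration, making the independence transparent; the paper's route has the (modest) advantage of being self-contained from~(\ref{jointr}) and~(\ref{GEMm}) without appealing to the $(N_{\ell,m})$ machinery, though the telescoping step it suppresses is essentially equivalent to the combinatorial identity you isolate.
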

\begin{proof}The is obtained by integrating~(\ref{jointr}) with respect to (\ref{GEMm}) and deducing the right normalization which is obscured by telescoping. 
\end{proof}
\subsection{Results for $\alpha=1/2, \mathbb{P}^{[m]}_{1/2}(\lambda)$}\label{m5}
When $\alpha=1/2,$ $(P_{\ell})\sim \mathrm{PD}(1/2,0),$  corresponds to the laws of the ranked lengths of Brownian motion. $S_{1/2}(1)=S_{1/2}\overset{d}=1/(4G_{1/2})$ and hence 
$S_{1/2,0}(\lambda)\overset{d}=\tau_{1/2}(\sqrt{\lambda})/\lambda$ has an Inverse Gaussian distribution, denoted $\mathrm{IG}(\frac{1}{2\lambda},\frac{1}{2})$ with explicit density ${\mbox e}^{\sqrt{\lambda}}{\mbox e}^{-\lambda t-1/(4t)}t^{-3/2}/(2\sqrt{\pi}).$ Furthermore $S_{1/2,m}(\lambda),$ in addition to its representation in~(\ref{Sm}), which allows it to be easily sampled, has, as can be read from \cite[Section 4]{Pit99}, a Generalized Inverse Gaussian distribution with density, satisfying for all $\theta>-1/2,$ where $S_{1/2,\theta}\overset{d}=1/(4G_{\theta+1/2}),$
\begin{equation}
f^{[m]}_{1/2}(t|\lambda)=\mathbb{P}(\frac{1}{4G_{\theta+1/2}}\in dt|4G_{\theta+1/2}G_{m+\theta}=\lambda)/dt.
\label{halfdensity}
\end{equation}
where 
\begin{equation}
Y_{m,-\theta}(0)\overset{d}=4G_{\theta+1/2}G_{m+\theta}\overset{d}=G^{2}_{2\theta+K_{m}},
\label{halfmap}
\end{equation}
and $K_{m}$ is the number of blocks in a partition of $[m]$ under a $\mathrm{PD}(1/2,\theta)$ distribution.

Following \cite[Section 8]{Pit02} and \cite[Section 4.5]{Pit06}, let $\sqrt{2}L_{1}\overset{d}=S^{-1/2}_{1/2},$ where $L_{1}$
denotes the local time of Brownian motion starting at $0$ up till time $1$, and let $B_{1}$ denote  Brownian motion at time 1, which has a standard Normal distribution. Then $(P_{\ell})|L_{1}=s$ has the same distribution as $(P_{\ell})|S_{1/2}=s^{-2}/2$ which is $\mathrm{PD}(1/2|s^{-2}/2).$ Although perhaps not well known in the statistical literature, the result of Aldous and Pitman~\cite[Corollary 5]{AldousPit} leads to an explicit description of the sequence which we denote as $(\hat{P}_{\ell}(s),\ell\ge 1),$ which is the size biased ordering of a  mass partition with distribution  
$\mathrm{PD}(1/2|s^{-2}/2).$
In particular the first size biased pick $\hat{P}_{1}(s),$ satisfies
$$
\hat{P}_{1}(s)\overset{d}=\frac{B^{2}_{1}}{B^{2}_{1}+s^{2}}
$$
and, for each $\ell\ge 1$,
\begin{equation}
\hat{P}_{\ell}(s)=\frac{s^{2}}{Q_{\ell-1}+s^{2}}-\frac{s^{2}}{Q_{\ell}+s^{2}},
\label{condstick}
\end{equation}
where $Q_{0}=0,$ and $Q_{\ell}=\sum_{i=1}^{\ell}X_{i}$ for $X_{i}$ independent with common distribution equivalent to $B^{2}_{1}.$ The representations in~(\ref{condstick}) lead easily to representations of the size biased ordering of any Poisson Kingman distribution with mixing derived from the case of a $1/2$-stable subordinator, see~\cite{Pit02}.  Our relevant result takes the form described below.

\begin{cor}\label{cor5}Let $(P_{\ell}(\lambda))\sim \mathbb{P}^{[m]}_{1/2}(\lambda)$, with corresponding $1/2$-diversity $\sqrt{2}L_{1,m}(\lambda)\overset{d}=S^{-1/2}_{1/2,m}(\lambda).$ Then for $m=0,1,2,\ldots$ its size-biased re-arrangement 
$(\tilde{P}_{\ell}(\lambda))$ is equivalent in (joint) distribution to
$$
(\hat{P}_{\ell}(L_{1,m}(\lambda)); \ell\ge 1),
$$
which is specified by~(\ref{condstick}). Furthermore, as indicated by~(\ref{halfdensity}) and (\ref{halfmap}),
 $(\hat{P}_{\ell}(L_{1,m}(G^{2}_{2\theta+K_{m}})))\sim \mathrm{GEM}(1/2,\theta)$.
\end{cor}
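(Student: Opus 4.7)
The strategy is to combine the Poisson-Kingman mixture representation $\mathbb{P}^{[m]}_{1/2}(\lambda)=\int_{0}^{\infty}\mathrm{PD}(1/2|t)f^{[m]}_{1/2}(t|\lambda)\,dt$ with the explicit Aldous-Pitman stick-breaking description~(\ref{condstick}) of the size-biased rearrangement of $\mathrm{PD}(1/2|s^{-2}/2)$, and then to pass to the $\mathrm{GEM}(1/2,\theta)$ case via Corollary~\ref{PDrecovery}.

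First I would realise $(P_{\ell}(\lambda))\sim\mathbb{P}^{[m]}_{1/2}(\lambda)$ in two stages: draw $T=S_{1/2,m}(\lambda)$ with density $f^{[m]}_{1/2}(t|\lambda)$, and then conditionally on $T=t$ draw $(P_{\ell}(\lambda))\sim\mathrm{PD}(1/2|t)$. Because size-biased rearrangement is a deterministic functional of the mass partition, it commutes with this mixing step, so the joint law of $(\tilde{P}_{\ell}(\lambda))$ coincides with the law of the size-biased rearrangement of $\mathrm{PD}(1/2|T)$ averaged over $T$. By the coupling recalled just before~(\ref{condstick}), $(P_{\ell})|S_{1/2}=t$ agrees in distribution with $(P_{\ell})|L_{1}=s$ whenever $t=s^{-2}/2$, and the Aldous-Pitman result~\cite[Corollary 5]{AldousPit} identifies the size-biased rearrangement of $\mathrm{PD}(1/2|s^{-2}/2)$ as precisely the sequence $(\hat{P}_{\ell}(s);\ell\ge 1)$ built from~(\ref{condstick}). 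Applying this with $s=L_{1,m}(\lambda)$, where $\sqrt{2}L_{1,m}(\lambda)\overset{d}=S^{-1/2}_{1/2,m}(\lambda)$, yields the first assertion.

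For the $\mathrm{GEM}(1/2,\theta)$ claim I would invoke Corollary~\ref{PDrecovery} with $\varrho_{2}=m$ and $\varrho_{1}=-\theta$, which gives $\mathrm{PD}(1/2,\theta)=\mathbb{E}[\mathbb{P}^{[m]}_{1/2}(Y_{m,-\theta}(0))]$, and then use~(\ref{halfmap}) to identify $Y_{m,-\theta}(0)\overset{d}=G^{2}_{2\theta+K_{m}}$. Substituting $\lambda=G^{2}_{2\theta+K_{m}}$ into the first assertion produces the size-biased arrangement of $\mathrm{PD}(1/2,\theta)$, namely $\mathrm{GEM}(1/2,\theta)$.

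The hard part will really only be bookkeeping: verifying that the joint law, not merely the one-dimensional marginals, is preserved through the mixture. This is essentially automatic, since~(\ref{condstick}) presents $(\hat{P}_{\ell}(s))$ as a deterministic functional of $s$ together with an independent i.i.d.\ sequence of $B_{1}^{2}$-distributed summands, so randomising $s$ over its marginal law (either $L_{1,m}(\lambda)$ in the first assertion or $L_{1,m}(G^{2}_{2\theta+K_{m}})$ in the $\mathrm{GEM}$ statement) automatically delivers the correct joint distribution of the size-biased sequence.
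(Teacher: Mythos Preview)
Your proposal is correct and matches the paper's approach: the paper does not give a formal proof for this corollary, but the surrounding discussion makes clear that the result follows precisely by combining the Poisson--Kingman mixture representation with the Aldous--Pitman stick-breaking description~(\ref{condstick}), and then specialising via~(\ref{halfdensity}) and~(\ref{halfmap}), exactly as you outline. Your explicit justification that size-biased rearrangement commutes with the mixing step, and that~(\ref{condstick}) is a deterministic functional of $s$ and an independent i.i.d.\ sequence, usefully fills in what the paper leaves implicit.
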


\begin{rem}The case of $m=0,$ in Corollary~\ref{cor5}, as well as our results in Theorem~\ref{Wcase00} with $\alpha=1/2$ and $m=0,$ corresponds to the Normalized Inverse Gaussian(NIG) case.  One may compare these with the representations first obtained in~\cite{Fav1}. 
\end{rem}

\end{document}